\documentclass[11pt]{article}

\usepackage[T1]{fontenc}
\usepackage{verbatim}
\usepackage{amsmath,amsthm}
\usepackage{xspace}
\usepackage{pifont}
\usepackage{amssymb}
\usepackage{dsfont}
\usepackage{makeidx}
\usepackage{mathrsfs}
\usepackage{xcolor, colortbl}
\usepackage{subcaption}
\usepackage[numbers]{natbib}
\usepackage{enumitem}
\usepackage{afterpage}
\usepackage{mathtools}
\usepackage{graphics}
\usepackage[export]{adjustbox}
\usepackage{tcolorbox}
\usepackage{authblk}
\usepackage[normalem]{ulem}
\usepackage[colorlinks,citecolor=blue,urlcolor=blue]{hyperref}
\mathtoolsset{showonlyrefs}

\def\eps{\varepsilon}

\newcommand{\R}{{\mathbb R}}

\newcommand{\N}{{\mathbb N}}

\newcommand{\bd}{\begin{displaymath}}
\newcommand{\ed}{\end{displaymath}}
\newcommand{\be}{\begin{equation}}
\newcommand{\ee}{\end{equation}}
\newcommand{\bq}{\begin{eqnarray}}
\newcommand{\eq}{\end{eqnarray}}
\newcommand{\bn}{\begin{eqnarray*}}
\newcommand{\en}{\end{eqnarray*}}

\newtheorem{theorem}{Theorem}[section]
\newtheorem{lemma}[theorem]{Lemma}
\newtheorem{proposition}[theorem]{Proposition}

\newtheorem{corollary}[theorem]{Corollary}

\newtheorem{remark}[theorem]{Remark}

\newtheorem{example}[theorem]{Example}

\newtheorem{definition}[theorem]{Definition}
\newtheorem{assumption}[theorem]{Assumption}
\numberwithin{equation}{section}

\begin{document}

\title{Potential Games on Unimodular Random Graphs}
\author[]{Eyal Neuman}
\author[]{Sturmius Tuschmann\thanks{ST is supported by the EPSRC Centre for Doctoral Training in Mathematics of Random Systems: Analysis, Modelling and Simulation (EP/S023925/1).}}
\affil[]{Department of Mathematics, Imperial College London}
\renewcommand\Authands{ and }
\date{\today}
\maketitle

\begin{abstract}
We study potential games on unimodular random graphs of bounded degree, where players interact through the underlying network. Using the unimodular measure, we define a well-posed global potential that captures both finite- and infinite-player games. A key observation is that the mass-transport principle identifies the first variation of this potential with the first-order condition of a representative (root) player. Under suitable convexity assumptions, we prove that minimizers of the potential coincide with quenched Nash equilibria, and conversely. We also establish the thermodynamic limit of the potential along weakly convergent sequences of unimodular measures. Finally, we present examples with semi-explicit equilibrium descriptions. In linear-quadratic games on unimodular graphs, equilibria are expressed in terms of the Green kernel of the simple random walk operator, while in convex settings, equilibria are characterized by solutions to nonlinear Poisson equations.
\end{abstract}

\begin{description}
\item[Mathematics Subject Classification (2020):] 	91A07, 91A43, 93E20 
\item[Keywords:] network games, sparse graphs, potential games, unimodular measures, local weak convergence, representative player, random walks on graphs
\end{description}

\section{Introduction}

Network games are games in which interactions between players are heterogeneous, often modeled through a graph. Each player’s action affects not only their own payoff, but also the local environment and the incentives of neighboring players. Such models arise naturally in a broad range of real-world applications, including autonomous driving \cite{hang2020human}, dynamic production management \cite{leng2005game}, and real-time bidding \cite{sayedi2018real}. From a mathematical perspective, their analysis is challenging because it must simultaneously account for heterogeneous interactions, large populations, and nonlinear feedback effects between individual actions and the environment.

In recent years, there has been substantial progress in the analysis of static and dynamic games on dense networks using the graphon framework (see, e.g., \cite{aurell2022stochastic,caines2019graphon,caines2021graphon,carmona2022stochastic, neuman2024stochastic,neuman2025stochastic, parise2021analysis, parise2023graphon, tangpi2024optimal}). A central advantage of the mean-field game (MFG) paradigm is that it simplifies the analysis of large games by reducing an $n$-player interaction problem to that of a single representative player via a fixed-point condition. Following this idea, \citet{lacker2023label} showed that a graphon game can be recast as a standard MFG on an enlarged state space, where each player is described not only by its state variable but also by a random label encoding its position in the graphon. As a consequence, equilibria can be characterized by a single forward–backward system rather than by a continuum of coupled equations, thereby recovering a level of tractability akin to that of classical MFGs.

At the same time, most real-world networks are sparse, as players typically maintain only a limited number of interactions due to informational, economic, or technological constraints. This sparsity plays a fundamental role in shaping strategic behavior, as it restricts the range of interactions and gives rise to strong locality effects. Consequently, dense network models often fail to capture key features of large-scale systems encountered in economics, engineering, and the social sciences. In contrast, sparse network models, in which node degrees remain locally bounded, provide a more realistic description of such environments and are therefore central to the study of large-population games.

Despite their importance, the theoretical understanding of games on sparse networks remains limited. While dense graph limits are well described by graphon theory \cite{lovasz2012large,lovasz2006limits}, the sparse regime requires fundamentally different tools. A natural framework for sparse graph limits is local weak convergence, introduced by \citet{benjamini2001recurrence} and independently by \citet{aldous2004objective}. This notion captures the asymptotic distribution of neighborhoods around a typical node and leads to probability measures on the space of countable, connected, locally finite, rooted graphs. In this setting, unimodular probability measures emerge as canonical limit objects, playing a role analogous to that of graphons in the dense case \cite{Aldous2007, benjamini2015unimodular}. Their defining property is the mass–transport principle (see \eqref{eq:MTP}). The framework of local weak convergence has already proved fruitful in the study of interacting particle systems on sparse graphs (see, e.g., \citep{ganguly2024hydrodynamic, guo2025particle, LackerRamananWu2021, LackerRamananWu2023b, lacker2023local, lacker2025marginal, oliveira2019interacting, OliveiraReisStolerman2020, ramananICM}). More recently, the authors of the present paper used this approach to analyze the convergence of Nash equilibria in games on sparse graphs \cite{NT26}. Motivated by the MFG perspective, this raises the following fundamental question about network games in the infinite-graph limit:
\begin{center}
\emph{Is there an analogue of the representative player in network games?}
\end{center}

We answer this question within the class of potential games, a particularly tractable and extensively studied class of games. Originally introduced in the static setting by \citet{monderer1996potential}, potential games have received renewed attention in recent years, partly through developments in multi-agent reinforcement learning (see, e.g., \citep{Busoniu2008,DiHuWangZhang2025,Littman1994,Marden2012,PlankZhang2026,ZhangBasar2021}) and partly through recent analytical advances in dynamic potential games \cite{GuoZhang2025,GuoLiZhang2024,GuoLiZhang2025Jumps}. A game is called a potential game if there exists a global functional, called a potential, such that every unilateral deviation by a player changes this functional by exactly the same amount as it changes that player’s payoff. This variational structure makes it possible to reformulate the search for Nash equilibria as an optimization problem for the potential, thereby opening the door to powerful analytical and computational methods.

While potential games have been extensively studied in finite-player and mean-field settings, their extension to graph-based and infinite-player models raises substantial challenges. In network settings, payoffs depend on local neighborhoods, and there is generally no canonical global state variable, which complicates both the construction and the identification of potential structures. Moreover, even when local interactions admit a potential representation, it is not clear that these local structures can be aggregated into a well-defined global potential on the graph, since this would require summing contributions from infinitely many players. On the other hand, the study of potential games on infinite random graphs is particularly relevant, as it is often unrealistic to assume that each player has detailed knowledge of the entire network. Collecting exact network data may be prohibitively costly and often impossible because of proprietary or privacy constraints \cite{parise2023graphon}. A more realistic approach is therefore to model the network through a probability distribution over graph realizations, such as a unimodular measure. This leads to the second question addressed in this paper:
\begin{center}
\emph{Can one define a potential game on an infinite random graph?}
\end{center}

The goal of this paper is to develop a general framework for potential games on unimodular random graphs, thereby bridging the theories of sparse graph limits and potential games. We consider games in which players are indexed by the vertices of a bounded-degree graph, possibly infinite, and interact through the underlying network. The players' actions take values in a general Hilbert space, a formulation flexible enough to cover static games as well as dynamic games in either discrete or continuous time. See Section~\ref{sec:model} for details.

In Section~\ref{sec-results}, we introduce a potential functional $\Phi$ for finite and infinite-player games, defined through the unimodular measure. Proposition~\ref{prop:Phi-finite} establishes that this potential is finite for all admissible actions, despite the possible presence of infinitely many players. Proposition~\ref{prop:Phi-Gateaux} then identifies its first variation with the first-order condition of a representative player via the mass–transport principle. In particular, Theorem~\ref{thm:main} shows that, under convexity with respect to each player's own action, minimizers of the potential \(\Phi\) form quenched Nash equilibria. Conversely, if $\Phi$ is itself convex, then every quenched Nash equilibrium is also a minimizer of $\Phi$. The mass-transport principle is again the key mechanism underlying this equivalence. In Theorem~\ref{thm:thermo-limit}, we then show that whenever a sequence of finite graphs converges in the local weak sense, the associated finite-player potentials converge to the potential of the limiting unimodular random graph, further justifying our definition of the infinite-player potential.

In Section~\ref{sec:examples}, we present examples in which the game admits semi-explicit solutions. In particular, Proposition~\ref{prop:lq-example} and Corollary~\ref{cor:green} show that, for linear–quadratic potential games on unimodular random graphs whose random walk operator has spectral radius strictly less than $1$, equilibria can be expressed in terms of the Green kernel of the graph. In Proposition~\ref{prop:W-example}, we further show that, in the nonlinear convex case, the equilibrium is characterized by a solution to the Poisson equation associated with a nonlinear Laplacian induced by the random graph realization, together with the convex component of the potential. Throughout, the mass-transport principle and its interplay with the potential play a central role in deriving these results. Sections~\ref{sec:proofs-Phi}--\ref{sec:proofs-examples} are devoted to the proofs of our results.

\section{Model Setup} \label{sec:model}

In this section, we set up a general framework for potential games on unimodular random graphs. We begin by introducing unimodularity, then present the game-theoretic model, and finally discuss several examples of graphs that illustrate the representative-player formulation.

\subsection{Unimodular random graphs}

We consider graphs $G=(V,E)$ that consist of a finite or countably infinite vertex set $V$ and an edge set $E$. All graphs are assumed to be simple (that is, they are undirected and have no loops or multiple edges) and locally finite (that is, the degree $\deg_G(v)$ of each vertex $v\in V$ is finite). For two vertices $u,v\in V$, denote by $d_G(u,v)$ the graph distance between $u$ and $v$, that is, the length of the shortest path in $G$ from $u$ to $v$. If $\{u,v\}\in E$, we write $u\sim v$ and call $u$ and $v$ neighbors. With a slight abuse of notation, we often write $v\in G$ instead of $v\in V$, and set $|G|:=|V|$ for the cardinality of the vertex set.

A rooted graph $(G,o)=(V,E,o)$ consists of a (simple, locally finite) graph $G=(V,E)$ with a distinguished vertex $o\in V$, called the root. Given a rooted graph $(G,o)$ and $r\in\mathbb N_0$, we write $B_r(G,o)$ for the induced rooted subgraph consisting of all vertices $v\in V$ whose graph distance $d_G(o,v)$ from the root $o$ is at most $r$. Two rooted graphs $(G,o)=(V,E,o)$ and $(G',o')=(V',E',o')$ are said to be isomorphic if there exists a bijection $\varphi:V\to V'$ with $\varphi(o)=o'$ such that
$$
\{u,v\}\in E \,\Longleftrightarrow\, \{\varphi(u),\varphi(v)\}\in E',\quad\text{for all }u,v\in V.
$$
In this case we write $(G,o)\cong (G',o')$ and say that $\varphi$ is an isomorphism from $(G,o)$ to $(G',o')$. We denote the isomorphism class of a rooted graph $(G,o)$ by $[G,o]$.
\begin{definition}\label{def:local-convergence}
Let $\mathcal G_\ast$ denote the set of isomorphism classes of countable, connected, rooted graphs. A sequence $\{[G_n,o_n]\}_{n\in\mathbb N}\subset\mathcal G_\ast$ is said to converge locally to $[G,o]\in\mathcal G_\ast$ if for every $r\in\mathbb N_0$ there exists $n_r\in\mathbb N$ such that $B_r(G_n,o_n)\cong B_r(G,o)$ for all $n\ge n_r$. 
\end{definition}
For $[G,o],[G',o']\in \mathcal G_\ast$, define the corresponding metric by
\be\label{eq:d_ast}
d_\ast\big([G,o],[G',o']\big):=\inf_{r\in\mathbb N_0}\big\{2^{-r}:B_r(G,o)\cong B_r(G',o')\big\}.
\ee
\begin{remark}
The metric in \eqref{eq:d_ast} turns $(\mathcal G_\ast, d_\ast)$ into a complete and separable metric space (see \cite{aldous2004objective}, Section~2.2). If one restricts $\mathcal G_\ast$ to graphs of uniformly bounded degree, $(\mathcal G_\ast, d_\ast)$ is compact (see \cite{lovasz2012large}, Chapter~18.3.1).
\end{remark}

Similarly to Definition~\ref{def:local-convergence}, let \(\mathcal G_{\ast\ast}\) denote the space of isomorphism classes $[G,o_1,o_2]$ of countable, connected, doubly rooted graphs \((G,o_1,o_2)\). Let $d_{\ast\ast}$ be the corresponding metric defined analogous to \eqref{eq:d_ast} (two doubly rooted graphs \((G,o_1,o_2)\) and \((G',o_1',o_2')\) are isomorphic if there exists an isomorphism \(\varphi:(G,o_1)\to(G',o_1')\) with $\varphi(o_2)=o_2'$). Assume that $\mathcal{G}_\ast$ and $\mathcal{G}_{\ast\ast}$ are equipped with their Borel \(\sigma\)-fields induced by $d_\ast$ and $d_{\ast\ast}$.

\begin{definition}
Let \(\mu\) be a probability measure on \(\mathcal G_\ast\). We say that \(\mu\) is unimodular if for every nonnegative measurable function $T:\mathcal G_{\ast\ast}\to [0,\infty]$ the following mass-transport principle holds:
\begin{equation} \label{eq:MTP}
\int_{\mathcal G_\ast} \sum_{v\in V(G)} T([G,o,v])\mu(d[G,o])=
\int_{\mathcal G_\ast} \sum_{v\in V(G)} T([G,v,o])\mu(d[G,o]).
\end{equation}
For a unimodular measure \(\mu\), a random rooted graph \([G,o]\) distributed according to $\mu$ is called a unimodular random graph.
\end{definition}
\begin{remark}\label{rem:local-weak-convergence}
Unimodular measures are central in the theory of sparse graphs, as they arise as limits of locally weakly convergent graph sequences (see \cite{benjamini2001recurrence}, Section~3.2). More precisely, a sequence of finite graphs \(\{G_n\}_{n\in\N}\) is said to converge locally weakly to a probability measure \(\mu\) on \(\mathcal G_\ast\) if the probability measures \(\{\mu_{G_n}\}_{n\in\N}\) on \(\mathcal G_\ast\), obtained by choosing a root uniformly at random and considering the associated rooted connected component, converge weakly to \(\mu\). We do not further formalize local weak convergence here, and only note that this notion can also be extended to random graph sequences. See \cite{Aldous2007,aldous2004objective,benjamini2001recurrence,bordenave2016lecture,van2024random} for further details.
\end{remark}

In what follows, we impose the following bounded-degree assumption.
\begin{assumption}\label{ass:degree-bound}
Let $\mu$ be a unimodular measure on $\mathcal{G}_\ast$. We assume that there exists a constant \(D\in\mathbb N\) such that
\be\label{eq:degree-bound}
\deg_G(v)\le D, \quad\text{for all } v\in V(G), \text{ for } \mu\text{-a.e. } [G,o].  
\ee
\end{assumption}

\subsection{The potential network game}

Throughout, let \(H\) denote a separable real Hilbert space with inner product \(\langle \cdot,\cdot\rangle_H\) and norm \(\|\cdot\|_H\). The set of admissible actions \(K\subset H\) is assumed to be nonempty, closed, and convex.

A measurable map \(x:\mathcal G_\ast\to H\) will be interpreted as an action rule.
Given such \(x\) and a rooted graph \((G,o)\), define 
\be\label{eq:xGv}
x^G_v:=x([G,v]), \quad v\in V(G).
\ee
Thus, a single action rule on the space $\mathcal{G}_\ast$ induces an action profile \(x^G=(x^G_v)_{v\in V(G)}\) on each countable, connected (unrooted) graph \(G\). We define the Hilbert space
\be \label{eq:H} 
\mathcal H:=L^2(\mu,H)=
\bigg\{x:\mathcal G_\ast\to H \text{ measurable} \,\bigg|\, 
\|x\|_{\mathcal H}^2:=\int_{\mathcal G_\ast}\|x([G,o])\|_H^2\mu(d[G,o])<\infty\bigg\},
\ee
endowed with the inner product
\[
\langle x,y\rangle_{\mathcal H}
:=
\int_{\mathcal G_\ast} \langle x([G,o]),y([G,o])\rangle_H\mu(d[G,o]),\quad x,y\in\mathcal{H}.
\]
Let
\be\label{eq:K}
\mathcal K:=\big\{x\in \mathcal H \,\big|\,  x([G,o])\in K \text{ for } \mu\text{-a.e. } [G,o]\big\}.
\ee
Since \(K\subset H\) is closed and convex, \(\mathcal K\subset \mathcal H\) is also closed and convex. Next, let
\be\label{eq:fh}
f:\mathcal G_\ast\times H\to \mathbb R
\quad\text{and}\quad
h:\mathcal G_{\ast\ast}\times H\times H\to \mathbb R
\ee
be measurable functions. We interpret \(f\) as the private cost and \(h\) as the interaction cost. For a fixed countable, connected graph \(G\) and an action profile \(\smash{x^G=(x^G_u)_{u\in V(G)}\in K^{V(G)}}\), define the cost functional $J_v^G$ of player \(v\in V(G)\) by
\begin{equation} \label{eq:J}
J_v^G(x^G)
:=
f([G,v],x^G_v)+\sum_{u\sim v} h([G,v,u],x^G_v,x^G_u).
\end{equation}
The corresponding game on the fixed countable, connected graph \(G\) will be referred to as the quenched game.

\begin{remark}\label{rem:examples-H}
The abstract Hilbert space \(H\) can be chosen in different ways, depending on whether one wants to model static or dynamic games. In particular, our framework covers the following examples.
\begin{enumerate}[label=(\roman*)]
\item \emph{Static games.}  
One may take \(H=\mathbb R^m\) for some \(m\in\mathbb N\), equipped with its usual Euclidean inner product.
\item \emph{Dynamic games.}  
Let \(T>0\) and fix a filtered probability space \((\Omega,\mathcal F,\mathbb F=(\mathcal F_t)_{0\le t\le T},\mathbb P)\). 
\begin{itemize}
\item \emph{Discrete time.}  
Let \(\mathbb T:=\{0=t_0<t_1<\ldots<t_N=T\}\) be a finite time grid. One may choose
\[
H=\bigg\{ a:\Omega\times\mathbb T\to\mathbb R\,\bigg|\,
a(t_j)\text{ is }\mathcal F_{t_j}\text{-measurable for every $j$, } 
\mathbb E\bigg[\sum_{j=0}^N  a(t_j)^2\bigg]<\infty\bigg\},
\]
equipped with the inner product
$\langle  a, b\rangle_H=\mathbb E[\sum_{j=0}^N  a(t_j) b(t_j)]$ for $a,b\in H$.
\item \emph{Continuous time.}  One may set 
\[
H=\bigg\{ a:\Omega\times[0,T]\to\mathbb R\,\bigg|\,
a \text{ is }\mathbb F\text{-prog.~measurable, }
\mathbb E\bigg[\int_0^T a(t)^2dt\bigg]<\infty\bigg\},
\]
equipped with the inner product
$\langle a,b\rangle_H=\mathbb E[\int_0^T  a(t) b(t)dt]$ for $a,b\in H$.
\end{itemize}
\end{enumerate}
\end{remark}
\begin{remark}
The restriction to action profiles induced by action rules as in \eqref{eq:xGv} is a central feature of the present framework. It ensures consistency under rerooting: whenever two players \(u,v\in V(G)\) have indistinguishable rooted positions in the graph
$G$, in the sense that the rooted graphs $(G,u)$ and $(G,v)$ are isomorphic, they must choose the same action, that is, \(x_u^G=x_v^G\). For graphs with nontrivial automorphisms, this can lead to a substantial reduction in complexity, since vertices lying in the same orbit of the automorphism group necessarily carry the same action under any admissible rule. For highly symmetric graphs, the analysis of the infinite-player game may therefore reduce to a finite-dimensional problem, as illustrated in Examples \ref{ex:cycles}, \ref{ex:boxes}, and \ref{ex:d-regular} below.
\end{remark}

\begin{definition}
Let \(x\in \mathcal K\). We say that \(x\) is a quenched Nash equilibrium if
\begin{equation} \label{eq:Nash}
J_o^G(x^G)\le J_o^G(a,x^G_{-o}),\quad \text{for all } a\in K,\text{ for } \mu\text{-a.e. } [G,o],
\end{equation}
where \(x^G_v=x([G,v])\) for all \(v\in V(G)\), and \((a,x^G_{-o})\) denotes the action profile obtained from \(x^G\) by replacing the root action $x^G_o$ by $a$.
\end{definition}
In other words, for $\mu$-a.e.~rooted graph $[G,o]$, the root player cannot lower their cost by a unilateral deviation, holding fixed the actions prescribed by the rule $x$ at all other vertices.

\begin{remark} Let \(G=(V,E)\) be a finite connected graph with \(|V|=n\in\N\). Then \(G\) induces a unimodular measure \(\mu_G\) on \(\mathcal G_\ast\) by choosing the root uniformly at random:
\be\label{eq:muG}
\mu_G = \frac{1}{n} \sum_{v\in V} \delta_{[G,v]}.
\ee
In this case, an action rule \(x\in \mathcal K\) induces an action profile \(x^G=(x_v^G)_{v\in V}\) on the finite player set \(V\). The quenched Nash equilibrium condition
\eqref{eq:Nash} holds if and only if
\[
J_v^G(x^G)\le J_v^G(a,x^G_{-v}),
\quad \text{for all } a\in K,\ v\in V.
\]
\end{remark}

We now impose the structural assumptions on the cost functionals in \eqref{eq:J} that make this game a potential game.

\begin{assumption}\label{ass:potential}
Assume the following on the cost functionals defined in \eqref{eq:fh} and \eqref{eq:J}.
\begin{enumerate}[label=(\roman*)]
\item For \(\mu\)-a.e.~\([G,o]\), the map \(a\mapsto f([G,o],a)\) is G\^ateaux differentiable on \(H\).

\item For \(\mu\)-a.e.~\([G,o]\) and every neighbor $v\sim o$, the map \(a\mapsto h([G,o,v],a,b)\) is G\^ateaux differentiable on \(H\) for every \(b\in H\). Moreover, there exists a measurable map
\be\label{eq:psi}
\psi:\mathcal G_{\ast\ast}\times H\times H\to \mathbb R,
\ee
such that for \(\mu\)-a.e.~\([G,o]\) and every neighbor $v\sim o$, the map \((a,b)\mapsto \psi([G,o,v],a,b)\) is G\^ateaux differentiable on \(H\times H\), and for all \(a,b\in H\),
\begin{equation} \label{eq:potential-id-1}
\nabla_1 \psi([G,o,v],a,b)=\nabla_1 h([G,o,v],a,b),
\end{equation}
\begin{equation} \label{eq:potential-id-2}
\nabla_2 \psi([G,o,v],a,b)=\nabla_1 h([G,v,o],b,a).
\end{equation}

\item There exist constants \(L_f,L_h\ge 0\) such that for \(\mu\)-a.e.~\([G,o]\), every neighbor \(v\sim o\), and all \(a,a',b,b'\in H\),
\begin{equation} \label{eq:lip-f}
\|\nabla f([G,o],a)-\nabla f([G,o],a')\|_H
\le
L_f\|a-a'\|_H,
\end{equation}
and
\begin{equation} \label{eq:lip-h}
\|\nabla_1 h([G,o,v],a,b)-\nabla_1 h([G,o,v],a',b')\|_H
\le
L_h(\|a-a'\|_H+\|b-b'\|_H).
\end{equation}

\item There exists a reference point \(a_\ast\in H\) such that
\begin{equation} \label{eq:int-base-f}
\int_{\mathcal G_\ast} |f([G,o],a_\ast)|\mu(d[G,o])<\infty,
\quad
\int_{\mathcal G_\ast} \|\nabla f([G,o],a_\ast)\|_H^2\mu(d[G,o])<\infty,
\end{equation}
and
\begin{equation} \label{eq:int-base-psi}
\int_{\mathcal G_\ast}\sum_{v\sim o} |\psi([G,o,v],a_\ast,a_\ast)|\mu(d[G,o])<\infty,
\end{equation}
\begin{equation} \label{eq:int-base-h}
\int_{\mathcal G_\ast}\sum_{v\sim o} \|\nabla_1 h([G,o,v],a_\ast,a_\ast)\|_H^2\mu(d[G,o])<\infty.
\end{equation}
\end{enumerate}
\end{assumption}

\begin{remark}
Assumptions \eqref{eq:potential-id-1}--\eqref{eq:potential-id-2} encode the exact potential property of \citet{monderer1996potential} in differential form at the level of a single edge $\{o,v\}$. Equivalently, in difference form, for all \(a,a',b,b'\in H\),
\be\label{eq:potential-id-1-diff}
\psi([G,o,v],a,b)-\psi([G,o,v],a',b)=h([G,o,v],a,b)-h([G,o,v],a',b),
\ee
and
\be\label{eq:potential-id-2-diff}
\psi([G,o,v],a,b)-\psi([G,o,v],a,b')=h([G,v,o],b,a)-h([G,v,o],b',a).
\ee
That is, a unilateral deviation at $o$ changes the interaction cost $h$ by exactly the corresponding change in the functional $\psi$ when its first argument is varied, see \eqref{eq:potential-id-1-diff}. Reversing the perspective along the edge $\{o,v\}$, a unilateral deviation at $v$ changes the interaction cost $h$ by exactly the corresponding change in the functional $\psi$ when its second argument is varied, see \eqref{eq:potential-id-2-diff}. 
\end{remark}

\subsection{Examples of graphs and associated representative players}

We illustrate the representative-player formulation with several basic examples. Here, the key point is that admissible action profiles in \eqref{eq:xGv} are induced by action rules \(x:\mathcal G_\ast\to H\), so that players with isomorphic rooted neighborhoods always choose the same action.

\begin{example}[Cycle graphs]\label{ex:cycles}
For $n\in\N$, let \(C_n\) be the cycle graph with vertices 
$$
V(C_n)=[n]:=\{1,\dots, n\},
$$ 
and let $\mu_{C_n}$ denote the corresponding unimodular measure on $\mathcal{G}_\ast$ from \eqref{eq:muG}. Since \(C_n\) is transitive, all the rooted graphs \((C_n,k)\) for $k\in [n]$ are isomorphic. Consequently, every action rule induces a constant action profile on \(C_n\). In other words, although the game on $C_n$ has \(n\) players, only a single player type appears in the representative-player formulation.
As \(n\to\infty\), the local weak limit of $\{C_n\}_{n\in\N}$ is the unimodular measure concentrated on the two-way infinite path $P\cong \mathbb Z$ rooted at an arbitrary vertex (see \cite{lovasz2012large}, Chapter~19, Example~19.3). Again, this limit graph is transitive, so the infinite-player game also has only one representative player (see Figure~\ref{fig:cycles}).
\begin{figure}[htb]
    \centering
    \includegraphics[width=0.55\linewidth]{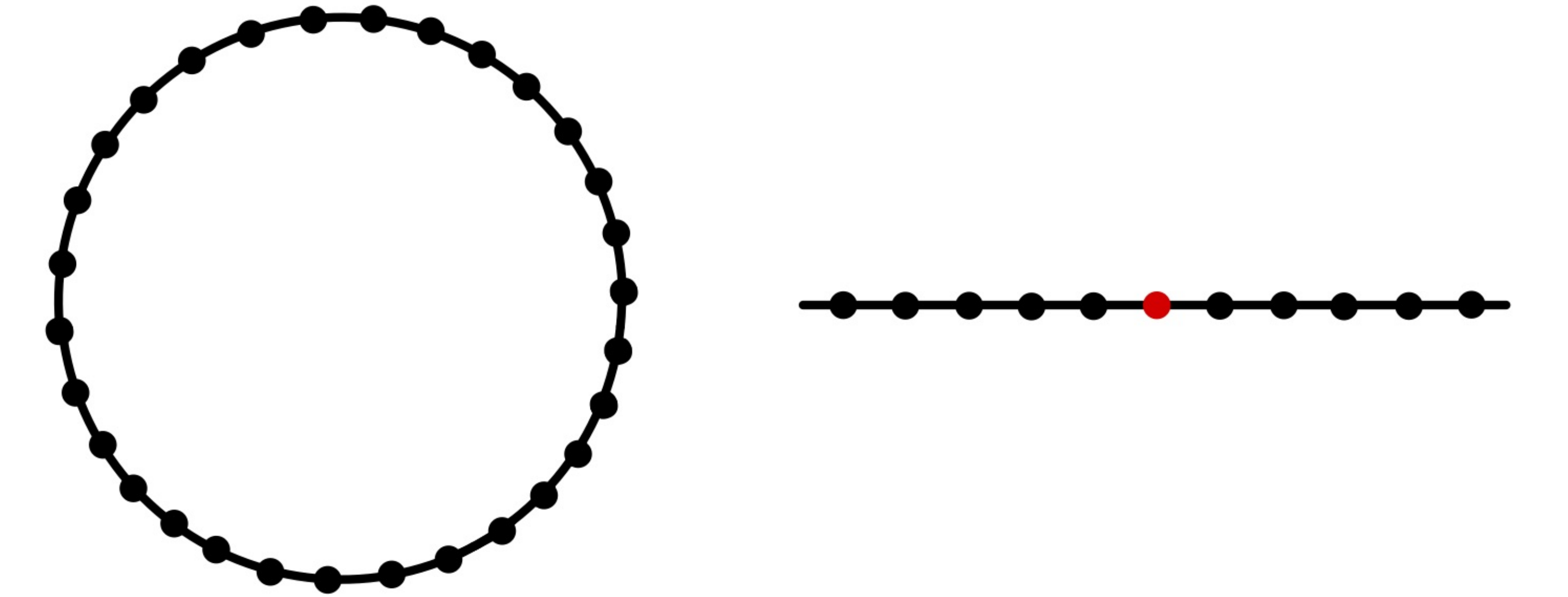}
    \caption{Left: the $n$-cycle $C_n$ on $[n]$ (for $n=30$). Right: its local weak limit, the two-way infinite path $P\cong \mathbb Z$ with a root (red).}
    \label{fig:cycles}
\end{figure}
\end{example}

The next example shows that passing to the infinite-player limit can reduce the dimensionality of the game significantly.

\begin{example}[Boxes in \(\mathbb Z^d\)]\label{ex:boxes}
Fix \(d\in\N\). For each \(n\in\N\), let \(G_n\) be the nearest-neighbor box graph on
\[
V(G_n)=[n]^d:=\{1,\dots,n\}^d,
\]
where two vertices \(u=(u_1,\dots,u_d)\) and \(v=(v_1,\dots,v_d)\) are adjacent if and only if there exists a unique index \(i\in\{1,\dots,d\}\) such that \(|u_i-v_i|=1\) and \(u_j=v_j\) for all \(j\neq i\). Let \(\mu_{G_n}\) be the associated unimodular measure on $\mathcal{G}_\ast$ from \eqref{eq:muG}. For \(v=(v_1,\dots,v_d)\in[n]^d\), define
\[
\delta_i(v):=\min\{v_i-1,n-v_i\},\quad i=1,\dots,d.
\]
Then the isomorphism class $[G_n,v]$ of \((G_n,v)\) is uniquely determined by the multiset
\[
\{\delta_1(v),\dots,\delta_d(v)\}.
\]
Each \(\delta_i(v)\) takes values in \(\{0,\dots,\left\lceil\frac n2\right\rceil -1\}\), so the number of representative players is exactly the number of multisets of size \(d\) drawn from a set of size \(\left\lceil\frac n2\right\rceil\), namely,
\[
\binom{\lceil n/2\rceil+d-1}{d},
\]
by the stars and bars theorem (see \cite{feller1991introduction}, Chapter II.5). Thus, the number of representative players grows like $n^d$ as \(n\to\infty\). However, the local weak limit of $\{G_n\}_{n\in\N}$ is the unimodular measure concentrated on \(\mathbb Z^d\) rooted at an arbitrary vertex (see \cite{van2024random}, Section 2.3.4). Since \(\mathbb Z^d\) is transitive, the limiting infinite-player game has only one representative player (see Figure~\ref{fig:boxes}). 
\begin{figure}[htb]
    \centering
    \includegraphics[width=0.45\linewidth]{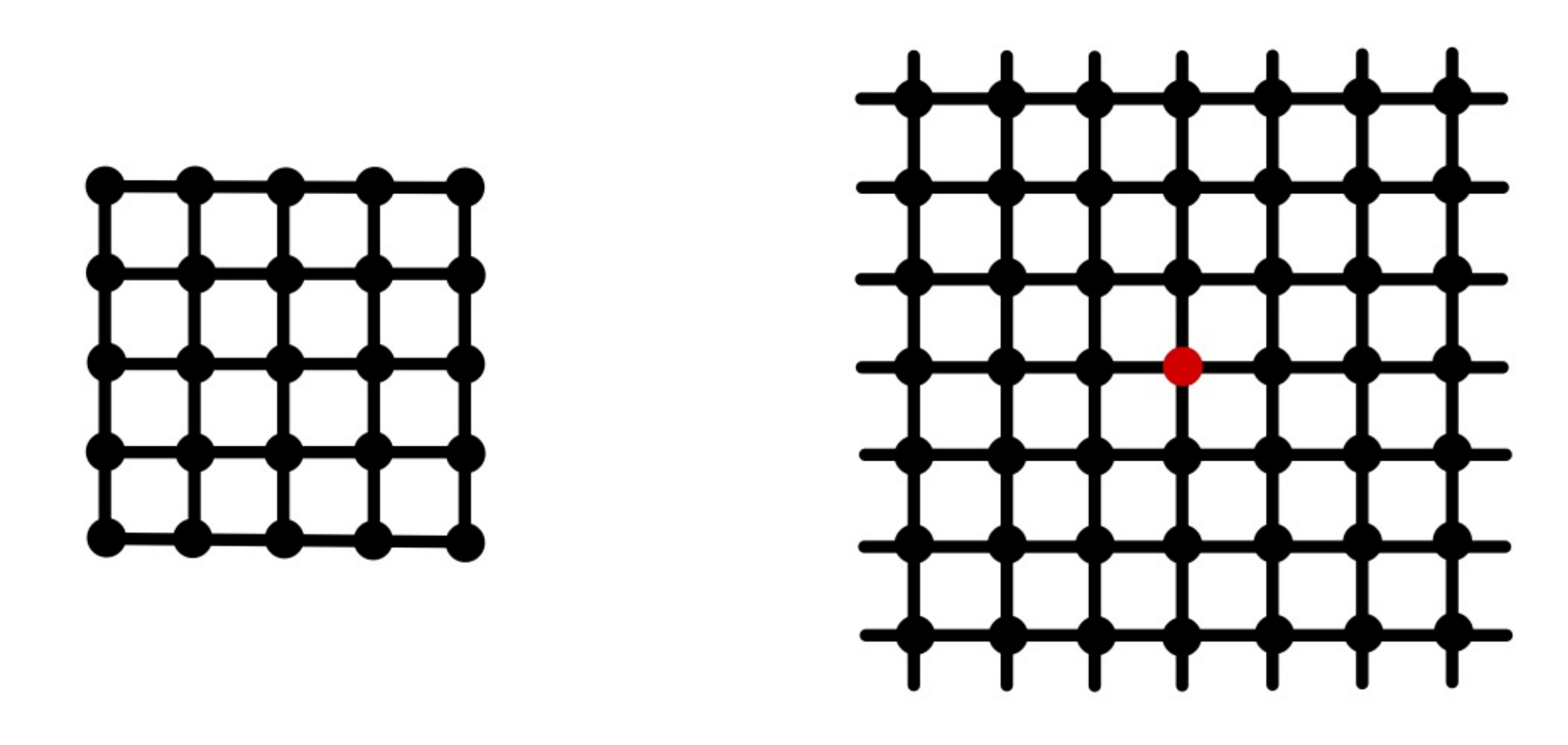}
    \caption{Illustration of the case $d=2$. Left: the nearest-neighbor box graph $G_n$ on $[n]^2$ (for $n=5$). Right: its local weak limit, the two-dimensional lattice $\mathbb Z^2$ with a root (red).}
    \label{fig:boxes}
\end{figure}
\end{example}

\begin{remark}For \(d=1\), Example~\ref{ex:boxes} yields the path graph \(P_n\) with \(\lceil n/2\rceil\) representative players, while for \(d=2\) it yields the grid of size \(n\times n\) with 
$$\binom{\lceil n/2\rceil+1}{2}=\frac{\lceil n/2\rceil(\lceil n/2\rceil+1)}{2}$$
representative players.
\end{remark}

\begin{example}[Random \(D\)-regular graphs]\label{ex:d-regular}
Fix \(D\ge 2\), and for each \(n\in\N\) with \(nD\) even, let \(R_n\) be a random \(D\)-regular graph on \(n\) vertices, that is, a graph chosen uniformly at random from the set of all (simple) graphs on \([n]\) in which every vertex has degree \(D\). For each realization of \(R_n\), let \(\mu_{R_n}\) denote the associated unimodular measure on $\mathcal{G}_\ast$ from \eqref{eq:muG}. In general, for a given realization of $R_n$, the rooted isomorphism classes \((R_n,v)\) for \(v\in V(R_n)\) need not coincide. Thus, in the finite-player game, the representative-player formulation still groups together vertices whose rooted neighborhoods are isomorphic, but one typically expects several distinct player types. However, this complexity disappears as \(n\to\infty\). Indeed, the annealed laws of the random unimodular measures \(\{\mu_{R_n}\}_{n\in\N}\) converge weakly to the unimodular measure concentrated on the infinite \(D\)-regular tree \(T_D\) (see \cite{van2024random}, Theorem 2.17). Since \(T_D\) is transitive, the limiting game has only one representative player. Intuitively, this reflects the fact that cycles become invisible at bounded distances from a typical vertex as \(n\to\infty\) (see Figure~\ref{fig:d-regular}).
\begin{figure}[htb]
    \centering
    \includegraphics[width=\linewidth]{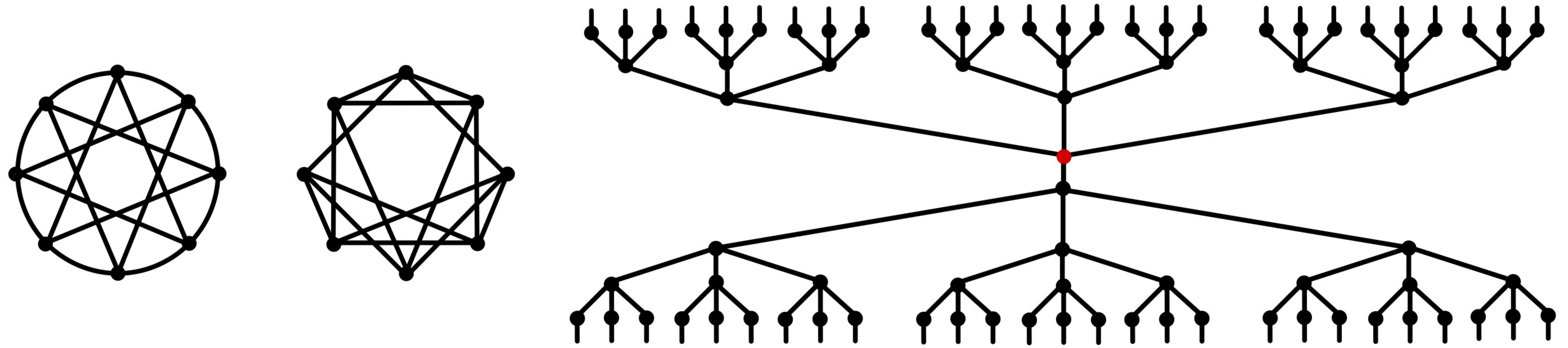}
    \caption{Illustration of the case \(D=4\). Left: a transitive realization of the random 4-regular graph $R_n$ on $[n]$ (for $n=8$). Middle: a non-transitive realization of the random 4-regular graph $R_n$ on $[n]$ (for $n=8$). Right: the local weak limit of $\{{R_n}\}_{n\in\N}$ given by the infinite 4-regular tree $T_4$ with a root (red).}
    \label{fig:d-regular}
\end{figure}
\end{example}

The above examples demonstrate the strength of the representative-player formulation. By identifying players with isomorphic rooted neighborhoods, it can reduce the dimensionality of network games substantially, often even to a single representative player. We now turn to the construction of a global potential that allows us to systematically exploit this idea. Throughout, it is helpful to keep the examples in mind as concrete illustrations.

\section{Main Results} \label{sec-results}

This section contains the main analytical results of the paper. We show that the game admits a potential functional whose first variation recovers the representative-player first-order condition. This variational structure yields a characterization of quenched Nash equilibria via minimizers of the potential and allows us to study the thermodynamic limit under weak convergence of unimodular measures.

\subsection{The potential functional}

Under Assumptions \ref{ass:degree-bound} and \ref{ass:potential}, define the potential functional $\Phi:\mathcal{H}\to\R$ by
\begin{equation} \label{eq:Phi}
\Phi(x)
:=
\int_{\mathcal G_\ast}
\bigg[
f\big([G,o],x([G,o])\big)
+
\frac12\sum_{v\sim o}\psi\big([G,o,v],x([G,o]),x([G,v])\big)
\bigg]
\mu(d[G,o]),\quad x\in\mathcal{H}.
\end{equation}
The factor \(1/2\) in \eqref{eq:Phi} compensates for the fact that each undirected edge is counted twice. The following proposition shows that $\Phi$ is well-defined.
\begin{proposition} \label{prop:Phi-finite}
Under Assumptions \ref{ass:degree-bound} and \ref{ass:potential}, the potential functional \(\Phi\) from \eqref{eq:Phi} is finite on \(\mathcal H\).
\end{proposition}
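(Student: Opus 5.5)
The plan is to show that the integrand in \eqref{eq:Phi} is $\mu$-integrable for every $x\in\mathcal H$. We bound $|f([G,o],x([G,o]))|$ and $|\psi([G,o,v],x([G,o]),x([G,v]))|$ by their values at the reference point $a_\ast$ plus quadratic terms. The base terms are controlled by \eqref{eq:int-base-f} and \eqref{eq:int-base-psi}. The quadratic terms are controlled by $\|x\|_{\mathcal H}^2$, using the mass-transport principle \eqref{eq:MTP} and the degree bound \eqref{eq:degree-bound} to handle the neighbour values $x([G,v])$.

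\textbf{Step 1 (private cost).} By Gâteaux differentiability and the Lipschitz bound \eqref{eq:lip-f}, the mean value formula along the segment from $a_\ast$ to $a$ gives
\[
|f([G,o],a)-f([G,o],a_\ast)|\le \|\nabla f([G,o],a_\ast)\|_H\|a-a_\ast\|_H+\tfrac{L_f}{2}\|a-a_\ast\|_H^2 .
\]
For this we use $t\mapsto f([G,o],a_\ast+t(a-a_\ast))$, which is differentiable with Lipschitz derivative, hence absolutely continuous. We then set $a=x([G,o])$, apply Cauchy--Schwarz and Young, and invoke \eqref{eq:int-base-f}. This bounds $\int|f([G,o],x([G,o]))|\,d\mu$ by a constant times $1+\|x\|_{\mathcal H}^2+\|a_\ast\|_H^2$.

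\textbf{Step 2 (interaction potential).} The gradient of $\psi$ is identified through \eqref{eq:potential-id-1}--\eqref{eq:potential-id-2} with $(\nabla_1 h([G,o,v],a,b),\nabla_1 h([G,v,o],b,a))$. Both components are Lipschitz with constant $L_h$ by \eqref{eq:lip-h}; the second applies \eqref{eq:lip-h} at the rerooted graph $[G,v]$ with neighbour $o$, which is legitimate for $\mu$-a.e.\ $[G,o]$. That last point follows from a standard MTP argument: the set of $[G,o]$ at which \eqref{eq:lip-h} fails for some vertex and neighbour is $\mu$-null, because unimodular measures are "everywhere-invariant". We then integrate along the segment from $(a_\ast,a_\ast)$ to $(a,b)$ with $a=x([G,o])$, $b=x([G,v])$. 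This gives
\[
|\psi(\cdot,a,b)|\le |\psi(\cdot,a_\ast,a_\ast)| + \big(\|\nabla_1 h([G,o,v],a_\ast,a_\ast)\|_H+\|\nabla_1 h([G,v,o],a_\ast,a_\ast)\|_H\big)\,\|(a,b)-(a_\ast,a_\ast)\| + C L_h\big(\|a-a_\ast\|_H^2+\|b-a_\ast\|_H^2\big).
\]

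\textbf{Step 3 (summing over neighbours via MTP).} Summing over $v\sim o$ and integrating against $\mu$, there are three kinds of terms.
\begin{itemize}
\item The base term $\psi(\cdot,a_\ast,a_\ast)$ is finite by \eqref{eq:int-base-psi}.
\item A term of the form $\sum_{v\sim o}\|x([G,v])\|_H^2$ is handled by \eqref{eq:MTP} with $T([G,o,v])=\mathbf 1_{v\sim o}\|x([G,v])\|_H^2$. This gives $\int\sum_{v\sim o}\|x([G,v])\|^2d\mu=\int \deg(o)\|x([G,o])\|^2d\mu\le D\|x\|_{\mathcal H}^2$.
\item Reversed-edge gradient terms $\sum_{v\sim o}\|\nabla_1 h([G,v,o],a_\ast,a_\ast)\|_H^2$ are handled the same way: MTP turns them into \eqref{eq:int-base-h}.
\end{itemize}
Cross terms are split with Young's inequality into these squared pieces. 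All transported functions are nonnegative and measurable, so \eqref{eq:MTP} applies. Measurability of $[G,o,v]\mapsto x([G,v])$ holds because the rerooting map $\mathcal G_{\ast\ast}\to\mathcal G_\ast$ is continuous.

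The main obstacle is Step 2's use of the Lipschitz property and the potential identities at the rerooted graph $[G,v]$, together with measurability. The quadratic growth estimate itself is routine once these are in place. I would handle the null-set issue first, by citing or quickly proving via \eqref{eq:MTP} that a property holding at the root $\mu$-a.s.\ holds at every vertex $\mu$-a.s. I would then run the estimates above.
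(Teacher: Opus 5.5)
Your proposal is correct and follows essentially the same route as the paper. Both arguments use a fundamental-theorem-of-calculus bound along the segment from $a_\ast$ (resp.\ $(a_\ast,a_\ast)$) to get quadratic growth of $f$ and $\psi$, and then use the degree bound and the mass-transport principle to turn the neighbour terms $\|x([G,v])\|_H^2$ and $\|\nabla_1 h([G,v,o],a_\ast,a_\ast)\|_H^2$ into root quantities. Your explicit justification, via the mass-transport principle, that \eqref{eq:lip-h} may be applied at the rerooted graph $[G,v,o]$ is a point the paper uses only implicitly in \eqref{eq:nabla-psi-bound2}.
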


The next proposition contains an important structural observation of the framework. Owing to the potential property in Assumption~\ref{ass:potential}, the first variation of \(\Phi\) can be identified with the first-order expression of the representative (root) player in the game.

\begin{proposition}\label{prop:Phi-Gateaux}
Suppose Assumptions \ref{ass:degree-bound} and \ref{ass:potential} hold, and let \(x,{y}\in \mathcal H\). Then \(\Phi\) is G\^ateaux differentiable at \(x\), and
\begin{equation}\begin{aligned} \label{eq:first-variation}
&\langle \nabla\Phi(x),y\rangle_\mathcal{H}\\
&=
\int_{\mathcal G_\ast}
\bigg\langle
\nabla f([G,o],x([G,o]))
+
\sum_{v\sim o}\nabla_1 h([G,o,v],x([G,o]),x([G,v])),
\,{y}([G,o])
\bigg\rangle_H
\mu(d[G,o]).
\end{aligned}\end{equation}
\end{proposition}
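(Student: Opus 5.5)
The plan is to compute the directional derivative $\lim_{t\to0}\frac1t(\Phi(x+ty)-\Phi(x))$ by differentiating under the integral and then to use the mass-transport principle \eqref{eq:MTP} to move the derivative in the second argument of $\psi$ onto the root. First I fix $x,y\in\mathcal H$ and write $x_v:=x([G,v])$, $y_v:=y([G,v])$. For each rooted graph the one-dimensional maps $t\mapsto f([G,o],x_o+ty_o)$ and $t\mapsto \psi([G,o,v],x_o+ty_o,x_v+ty_v)$ are differentiable. By Assumption~\ref{ass:potential}(ii) their derivatives at $t$ are $\langle\nabla f([G,o],x_o+ty_o),y_o\rangle_H$ and $\langle\nabla_1 h([G,o,v],\cdot,\cdot),y_o\rangle_H+\langle\nabla_1 h([G,v,o],x_v+ty_v,x_o+ty_o),y_v\rangle_H$. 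Here \eqref{eq:potential-id-1}--\eqref{eq:potential-id-2} replace the partial gradients of $\psi$ by those of $h$. By the mean value theorem, the difference quotient equals the derivative at some intermediate $s\in(0,t)$; more cleanly, I write it as $\int_0^1$ of the derivative at $st$, using the fundamental theorem of calculus along the segment. This needs continuity of the derivatives in $t$, which follows from the Lipschitz bounds \eqref{eq:lip-f}--\eqref{eq:lip-h}.

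Next I pass $t\to0$ by dominated convergence. Using \eqref{eq:lip-f}, for $|t|\le1$,
\[
\|\nabla f([G,o],x_o+ty_o)\|_H\le \|\nabla f([G,o],a_\ast)\|_H+L_f(\|x_o\|_H+\|y_o\|_H+\|a_\ast\|_H).
\]
Multiplying by $\|y_o\|_H$ gives a $\mu$-integrable majorant by \eqref{eq:int-base-f}, Cauchy--Schwarz and $x,y\in\mathcal H$. The $h$-terms are handled the same way with \eqref{eq:lip-h} and \eqref{eq:int-base-h}; the edge sums have at most $D$ terms by Assumption~\ref{ass:degree-bound}. The only new issue is terms evaluated at a neighbor, such as $\|x_v\|_H$ or $\|y_v\|_H$ and $\|\nabla_1h([G,v,o],a_\ast,a_\ast)\|_H$. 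Their integrability I obtain from the mass-transport principle applied to the nonnegative function $T([G,o,v])=\mathbf 1_{\{o\sim v\}}\,g([G,v])$. This gives $\int\sum_{v\sim o}g([G,v])\,d\mu=\int\deg(o)\,g([G,o])\,d\mu\le D\int g\,d\mu$. In this way $\int\sum_{v\sim o}\|x_v\|_H^2\,d\mu\le D\|x\|^2_{\mathcal H}$, and similarly for the reversed gradient terms. This yields
\[
\langle\nabla\Phi(x),y\rangle_{\mathcal H}=\int\Big[\langle\nabla f([G,o],x_o),y_o\rangle_H+\tfrac12\sum_{v\sim o}\big(\langle\nabla_1 h([G,o,v],x_o,x_v),y_o\rangle_H+\langle\nabla_1 h([G,v,o],x_v,x_o),y_v\rangle_H\big)\Big]d\mu.
\]

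The key step is then to apply \eqref{eq:MTP} to $T([G,o,v]):=\mathbf 1_{\{o\sim v\}}\langle\nabla_1h([G,v,o],x_v,x_o),y_v\rangle_H$. Since $T$ is signed, I apply the principle separately to its positive and negative parts, both finite by the integrability just shown. This shows that the second edge sum equals $\int\sum_{v\sim o}\langle\nabla_1 h([G,o,v],x_o,x_v),y_o\rangle_H\,d\mu$. The two halves therefore combine and cancel the factor $\tfrac12$, giving \eqref{eq:first-variation}. Finally, the right-hand side is linear in $y$ and bounded by $C\|y\|_{\mathcal H}$ by the estimates above, so it defines $\nabla\Phi(x)\in\mathcal H$ via Riesz, which establishes Gâteaux differentiability.

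I expect the main obstacle to be the bookkeeping of measurability and integrability for quantities evaluated at neighbors. Measurability requires that $[G,o,v]\mapsto x([G,v])$ be measurable on $\mathcal G_{\ast\ast}$, which holds because the forgetful rerooting map $\mathcal G_{\ast\ast}\to\mathcal G_\ast$ is continuous. Integrability rests entirely on the degree-bound/mass-transport estimate, so I would state that estimate once as a small lemma and invoke it for every term.
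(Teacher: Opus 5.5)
Your proposal is correct and follows essentially the same route as the paper: differentiate the integrand pointwise, write the difference quotient via the fundamental theorem of calculus, and dominate it uniformly in $|t|\le 1$ by a majorant built from the Lipschitz bounds, the degree bound and the mass-transport principle for neighbor terms, then merge the two half-sums by applying the mass-transport principle to an integrable signed transport. The remaining differences are cosmetic: you transport the reversed term, you substitute the potential identities before the limit, and you invoke Riesz explicitly to obtain $\nabla\Phi(x)\in\mathcal H$, which makes explicit a boundedness point the paper leaves implicit.
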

Propositions~\ref{prop:Phi-finite} and \ref{prop:Phi-Gateaux} are proved in Section~\ref{sec:proofs-Phi}. Motivated by Proposition \ref{prop:Phi-Gateaux}, define the representative-player first-order operator \(F\) on $\mathcal{H}$ by
\begin{equation} \label{eq:F}
F(x)([G,o])
:=\nabla f([G,o],x([G,o]))+\sum_{v\sim o}\nabla_1 h([G,o,v],x([G,o]),x([G,v])).
\end{equation}
Under Assumptions \ref{ass:degree-bound} and \ref{ass:potential}, the operator \(F:\mathcal{H}\to\mathcal{H}\) is well-defined. Indeed, arguing as in the proof of Proposition \ref{prop:Phi-finite}, one can show that \(F(x)\in\mathcal H\). For a general closed, convex set \(K\), the associated representative-player first-order condition is the variational inequality
\begin{equation} \label{eq:VI}
\langle F(x), z-x\rangle_{\mathcal H}\ge 0,
\quad \text{for all } z\in\mathcal K.
\end{equation}
In the unconstrained case where \(K=H\), we have \(\mathcal K=\mathcal H\). Then \eqref{eq:VI} reduces to the unconstrained representative-player first-order condition
\be\label{eq:unconstrained}
F(x)=0.
\ee
\begin{remark}
The first-order condition \eqref{eq:VI} (resp. \eqref{eq:unconstrained}) is formulated in terms of the root player (or representative player), analogous to the representative player in mean-field games. As illustrated in Examples~\ref{ex:cycles}, \ref{ex:boxes}, and \ref{ex:d-regular}, this viewpoint can substantially reduce the dimensionality of the corresponding equilibrium conditions. As we shall see, the potential functional \(\Phi\) captures these conditions in a unified way and thereby provides a convenient tool for characterizing quenched Nash equilibria.
\end{remark}

\subsection{Minimizers and quenched Nash equilibria}

We now connect the variational problem for the potential functional \(\Phi\) to the notion of quenched Nash equilibria.

\begin{assumption}\label{ass:convex}
For \(\mu\)-a.e.~\([G,o]\) and every family \((b_v)_{v\sim o}\subset H\), the map
\[
a \mapsto f([G,o],a)+\sum_{v\sim o} h([G,o,v],a,b_v)
\]
is convex.
\end{assumption}

\begin{theorem}\label{thm:main}
Suppose Assumptions \ref{ass:degree-bound}, \ref{ass:potential}, and \ref{ass:convex} hold, and let \(x^\ast\in\mathcal K\). Consider the following statements:
\begin{enumerate}[label=(\roman*)]
\item \(x^\ast\) is a minimizer of \(\Phi\) over \(\mathcal K\).

\item \(x^\ast\) solves the representative-player first-order variational inequality \eqref{eq:VI}.

\item \(x^\ast\) is a quenched Nash equilibrium.
\end{enumerate}
Then \(\text{(i)}\implies\text{(ii)}\iff\text{(iii)}\). If, in addition, \(\Phi\) is convex on \(\mathcal K\), then also \(\text{(ii)}\implies\text{(i)}\).
\end{theorem}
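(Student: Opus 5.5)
We prove the implications (i)$\Rightarrow$(ii), (ii)$\Rightarrow$(iii), (iii)$\Rightarrow$(ii) in turn, and (ii)$\Rightarrow$(i) under convexity of $\Phi$. Everything rests on Proposition~\ref{prop:Phi-Gateaux}, which identifies $\langle\nabla\Phi(x),y\rangle_{\mathcal H}$ with $\langle F(x),y\rangle_{\mathcal H}$.

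\emph{(i)$\Rightarrow$(ii).} Let $x^\ast$ minimize $\Phi$ over $\mathcal K$ and take $z\in\mathcal K$. By convexity of $\mathcal K$, $x^\ast+t(z-x^\ast)\in\mathcal K$ for $t\in(0,1]$, so $t^{-1}(\Phi(x^\ast+t(z-x^\ast))-\Phi(x^\ast))\ge0$. Letting $t\downarrow0$ and using Proposition~\ref{prop:Phi-Gateaux} gives $\langle F(x^\ast),z-x^\ast\rangle_{\mathcal H}\ge0$, which is \eqref{eq:VI}.

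\emph{(ii)$\Rightarrow$(i) when $\Phi$ is convex on $\mathcal K$.} A convex Gâteaux differentiable function satisfies $\Phi(z)\ge\Phi(x^\ast)+\langle\nabla\Phi(x^\ast),z-x^\ast\rangle_{\mathcal H}$. To see this, note that $t\mapsto\Phi(x^\ast+t(z-x^\ast))$ is convex on $[0,1]$, so its right derivative at $0$ is at most the difference quotient at $t=1$. Combined with \eqref{eq:VI}, this gives $\Phi(z)\ge\Phi(x^\ast)$.

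\emph{(ii)$\iff$(iii).} Fix $[G,o]$ and set $g_{[G,o]}(a):=J^G_o(a,x^{\ast G}_{-o})=f([G,o],a)+\sum_{v\sim o}h([G,o,v],a,x^\ast([G,v]))$. By Assumption~\ref{ass:convex} this map is convex. It is Gâteaux differentiable with gradient $F(x^\ast)([G,o])$ at $a=x^\ast([G,o])$. By the convex first-order criterion on the closed convex set $K$, the root's best-response condition \eqref{eq:Nash} at $[G,o]$ is equivalent to the pointwise variational inequality
\[
\langle F(x^\ast)([G,o]),a-x^\ast([G,o])\rangle_H\ge0\quad\text{for all }a\in K.
\]
\begin{itemize}
\item \emph{(iii)$\Rightarrow$(ii).} The pointwise inequality holds $\mu$-a.e. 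Taking $a=z([G,o])$ and integrating gives \eqref{eq:VI}; integrability holds because $F(x^\ast)\in\mathcal H$.
\item \emph{(ii)$\Rightarrow$(iii).} This is the localization step, which I expect to be the main obstacle. One must pass from the integrated inequality to the pointwise one $\mu$-a.e., for all $a\in K$ simultaneously.
\end{itemize}

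For the localization, let $\{a_k\}$ be a countable dense subset of $K$, which exists since $H$ is separable. For each $k$ and each measurable set $A\subset\mathcal G_\ast$, define $z:=a_k\mathbf 1_A+x^\ast\mathbf 1_{A^c}$. Then $z\in\mathcal K$, and $z\in\mathcal H$ since constants lie in $L^2$ of a probability measure. Plugging $z$ into \eqref{eq:VI} gives $\int_A\langle F(x^\ast),a_k-x^\ast\rangle_H\,d\mu\ge0$ for every measurable $A$. Hence the integrand is nonnegative off a null set $N_k$.

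On the complement of $\bigcup_kN_k$, continuity in $a$ and density extend the pointwise inequality to all $a\in K$. Here $\mu$ is defined on isomorphism classes and $x^\ast$ is a function of $[G,v]$, so the deviation by the root alone is exactly what $z$ encodes at the level of the root's own variable. The neighbors' actions $x^\ast([G,v])$ stay fixed, because the first-order operator $F$ depends only on the root argument through $\nabla_1$. So no mass-transport argument is needed at this step; the mass-transport principle was already absorbed into Proposition~\ref{prop:Phi-Gateaux}. Finally, convexity of $g_{[G,o]}$ gives \eqref{eq:Nash}.
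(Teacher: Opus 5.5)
Your proposal is correct and follows essentially the same route as the paper. The steps match one by one. (i)$\Rightarrow$(ii) uses the one-sided derivative and Proposition~\ref{prop:Phi-Gateaux}. (iii)$\Rightarrow$(ii) uses the pointwise first-order condition for the convex root cost, followed by integration. (ii)$\Rightarrow$(iii) localizes with a countable dense subset of $K$ and test rules $z=a_k\mathbf 1_A+x^\ast\mathbf 1_{A^c}$; the paper phrases this as a contradiction on the set where the integrand is negative. (ii)$\Rightarrow$(i) uses the gradient inequality for convex $\Phi$.

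One justification is off. Mass transport is not needed at the localization step because $F(x^\ast)$ is frozen at $x^\ast$ and the inequality is linear in $z([G,o])$. It is not because $z$ deviates only the root: changing the rule on $A$ changes the action at every vertex whose rooted class lies in $A$. It is also not because $F$ depends only on the root argument, since $F(x^\ast)$ also depends on the neighbors' actions $x^\ast([G,v])$.
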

Theorem~\ref{thm:main} is proved in Section~\ref{sec:proof-main}.
\begin{remark}
Theorem~\ref{thm:main} shows that minimizers of the potential functional \(\Phi\) are quenched Nash equilibria under convexity in the own action. Conversely, under the additional assumption that \(\Phi\) is convex, quenched Nash equilibria also minimize \(\Phi\). The key mechanism behind this correspondence is that the mass-transport principle \eqref{eq:MTP} and the potential property \eqref{eq:potential-id-1}--\eqref{eq:potential-id-2} identify the first variation of \(\Phi\) with the root player's first-order condition.
\end{remark}

\subsection{The thermodynamic limit}

Weak convergence of unimodular measures naturally arises from local weak convergence of graph sequences (see Remark~\ref{rem:local-weak-convergence}). It is therefore important to understand how the potential functional behaves under such convergence. In view of Theorem~\ref{thm:main}, this also provides information on the behavior of quenched Nash equilibria along locally weakly convergent graph sequences. 

Recall that $\mathcal{G}_\ast$ (resp.~$\mathcal{G}_{\ast\ast}$) is the space of isomorphism classes of countable, connected, rooted (resp.~doubly rooted) graphs equipped with the topology induced by $d_\ast$ (resp.~$d_{\ast\ast}$).

\begin{definition} 
Let $\{\mu_n\}_{n\in\N}$ be a sequence of probability measures on $\mathcal{G}_*$, and let $\mu$ be another probability measure on $\mathcal{G}_*$. The sequence $\{\mu_n\}_{n\in\N}$ converges weakly to $\mu$, denoted
\[
\mu_n \,\xrightarrow{w}\, \mu,
\]
if for every bounded continuous function $\theta:\mathcal{G}_* \to \mathbb{R}$, we have
\[
\int_{\mathcal{G}_*} \theta([G,o]) \, \mu_n(d[G,o]) \,\longrightarrow\, \int_{\mathcal{G}_*} \theta([G,o]) \, \mu(d[G,o]), \quad \text{as } n \to \infty.
\]
\end{definition}

\begin{assumption}\label{ass:thermo}
Let \(\{\mu_n\}_{n\in\N}\) be unimodular probability measures on \(\mathcal G_\ast\) such that \(\mu_n\xrightarrow{w}\mu\), and assume that all \(\mu_n\) are supported on graphs with uniform degree bound \(D\in\mathbb N\) as in \eqref{eq:degree-bound}. Assume further that \(f:\mathcal G_\ast\times H\to\mathbb R\) and \(\psi:\mathcal G_{\ast\ast}\times H\times H\to\mathbb R\) from \eqref{eq:fh} and \eqref{eq:psi} are continuous. Finally, suppose there exist \(\eps>0\) and a reference point $a_\ast\in H$ such that
\begin{equation}\label{eq:thermo-base-f}
\sup_{n\in\N}\int_{\mathcal G_\ast}|f([G,o],a_\ast)|^{1+\eps}\mu_n(d[G,o])<\infty,
\quad
\sup_{n\in\N}\int_{\mathcal G_\ast}\|\nabla f([G,o],a_\ast)\|_H^{2+\eps}\mu_n(d[G,o])<\infty,
\end{equation}
and
\begin{equation}\label{eq:thermo-base-psi}
\sup_{n\in\N}\int_{\mathcal G_\ast}\sum_{v\sim o}|\psi([G,o,v],a_\ast,a_\ast)|^{1+\eps}\mu_n(d[G,o])<\infty,
\end{equation}
\begin{equation}\label{eq:thermo-base-h}
\sup_{n\in\N}\int_{\mathcal G_\ast}\sum_{v\sim o}\|\nabla_1 h([G,o,v],a_\ast,a_\ast)\|_H^{2+\eps}\mu_n(d[G,o])<\infty.
\end{equation}
\end{assumption}

\begin{theorem}\label{thm:thermo-limit}
Suppose Assumptions \ref{ass:degree-bound}, \ref{ass:potential}(i)--(iii), and \ref{ass:thermo} hold. Let \(x\in\mathcal H\) be an action rule satisfying
\begin{equation}\label{eq:thermo-x}
\sup_{n\in\N}\int_{\mathcal G_\ast}\|x([G,o])\|_H^{2+\eps}\mu_n(d[G,o])<\infty,\quad \int_{\mathcal G_\ast}\|x([G,o])\|_H^{2+\eps}\mu(d[G,o])<\infty,
\end{equation}
for the same \(\eps>0\) as in Assumption~\ref{ass:thermo}. Then
\[
\Phi_n(x)\longrightarrow \Phi(x),
\quad \text{as } n \to \infty,
\]
where \(\Phi_n\) denotes the potential functional \eqref{eq:Phi} with \(\mu\) replaced by \(\mu_n\).
\end{theorem}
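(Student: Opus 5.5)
The plan is to write $\Phi_n(x)=\int g_x\,d\mu_n$ and $\Phi(x)=\int g_x\,d\mu$ for the single integrand
\[
g_x([G,o]):=f\big([G,o],x([G,o])\big)+\frac12\sum_{v\sim o}\psi\big([G,o,v],x([G,o]),x([G,v])\big),
\]
and to deduce $\int g_x\,d\mu_n\to\int g_x\,d\mu$ from weak convergence $\mu_n\xrightarrow{w}\mu$ combined with uniform integrability of $\{g_x\}$ under $\{\mu_n\}$. The proof has two parts: continuity of $g_x$, and a uniform bound on a moment of order $1+\eps'$ for some $\eps'>0$. Once both are in place, the standard truncation argument concludes. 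One applies weak convergence to the bounded continuous function $(g_x\wedge M)\vee(-M)$. The tails $\int |g_x|\mathbf 1_{\{|g_x|>M\}}\,d\mu_n$ are controlled by $M^{-\eps'}\sup_n\int|g_x|^{1+\eps'}d\mu_n$, and the same bound holds for $\mu$.

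\emph{Step 1 (continuity).} Under Assumption~\ref{ass:degree-bound}, two rooted graphs at $d_\ast$-distance at most $2^{-(r+1)}$ have isomorphic balls $B_{r+1}$. Such an isomorphism matches the neighbours $v\sim o$ bijectively, and it makes the doubly rooted classes $[G,o,v]$ and the rerooted classes $[G,v]$ close in $d_{\ast\ast}$ and $d_\ast$. Hence the finite sum over neighbours in $g_x$ is a continuous combination of $f$ and $\psi$, which are continuous by Assumption~\ref{ass:thermo}, evaluated at $x([G,o])$ and $x([G,v])$. This step needs $x$ itself to be continuous, at least at $\mu$-a.e.\ point, which by the continuous mapping theorem is enough. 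I expect this to be the main obstacle. A merely measurable $x$ does not interact well with weak convergence, and Lusin-type approximations of $x$ are taken with respect to $\mu$, so they do not control the $\mu_n$. My first attempt would be to read the action rule $x$ as continuous, which is automatic on the relevant atoms when the $\mu_n$ are finite-graph measures and $\mu$ is supported on a transitive graph. Failing that, I would add $\mu$-a.e.\ continuity of $x$ as a standing hypothesis.

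\emph{Step 2 (uniform moments).} Using the Lipschitz bounds \eqref{eq:lip-f}, the descent-type inequality gives
\[
|f([G,o],a)|\le |f([G,o],a_\ast)|+\|\nabla f([G,o],a_\ast)\|_H\|a-a_\ast\|_H+\tfrac{L_f}{2}\|a-a_\ast\|_H^2 .
\]
For $\psi$, the identity \eqref{eq:potential-id-1} gives $\nabla_1\psi=\nabla_1 h([G,o,v],\cdot,\cdot)$, and the identity \eqref{eq:potential-id-2} gives $\nabla_2\psi=\nabla_1 h([G,v,o],\cdot,\cdot)$. Both are Lipschitz by \eqref{eq:lip-h}. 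Integrating along the segment from $(a_\ast,a_\ast)$ to $(a,b)$ therefore yields
\[
|\psi([G,o,v],a,b)|\lesssim |\psi([G,o,v],a_\ast,a_\ast)|+\big(\|\nabla_1h([G,o,v],a_\ast,a_\ast)\|_H+\|\nabla_1h([G,v,o],a_\ast,a_\ast)\|_H\big)(\|a\|_H+\|b\|_H+1)+(\|a\|_H+\|b\|_H+1)^2 .
\]
I would raise these bounds to the power $1+\eps/2$ and apply Young/Hölder, pairing each gradient term of moment $2+\eps$ with $\|x\|_H$ of moment $2+\eps$. The quadratic term to the power $1+\eps/2$ is exactly the $(2+\eps)$-moment in \eqref{eq:thermo-x}.

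Two terms live at the neighbour $v$ rather than at the root: $\|x([G,v])\|_H^{2+\eps}$ and $\|\nabla_1h([G,v,o],a_\ast,a_\ast)\|_H^{2+\eps}$. I would move them to the root with the mass-transport principle \eqref{eq:MTP} for each unimodular $\mu_n$, applied with $T([G,o,v])=\mathbf 1_{\{v\sim o\}}\|x([G,v])\|_H^{2+\eps}$ and its analogue for the gradient term. Together with $\deg\le D$, this bounds both by $D$ times the root quantities controlled in \eqref{eq:thermo-base-f}--\eqref{eq:thermo-base-h} and \eqref{eq:thermo-x}. This gives $\sup_n\int|g_x|^{1+\eps/2}d\mu_n<\infty$. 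The same computation under $\mu$, which is unimodular as a weak limit of unimodular measures, gives finiteness for $\mu$.
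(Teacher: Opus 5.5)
\textbf{Step 1 matches the paper; the measurable case is where the paper errs.} For continuous $x$ your argument is the paper's Step~1. It uses the same integrand and the same growth bounds from integrating the Lipschitz gradients out of $a_\ast$ via \eqref{eq:potential-id-1}--\eqref{eq:potential-id-2}. It uses mass transport in the same way to move $\|x([G,v])\|_H^{2+\eps}$ and the gradient terms at $[G,v,o]$ to the root, and the same truncation/Markov estimate with $\delta=\eps/2$.

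The difference is the merely measurable case, which you decline to treat and the paper handles in Step~2. There, Lusin's theorem for $\mu$ gives a compact $L$ with $\mu(L^c)<\rho/2$ and $x|_L$ continuous, and Dugundji extends $x|_L$ to a continuous $x_\rho$. Then $L$ is shrunk to $L^t=\{[G,o]\in L: d_\ast([G,o],L^c)\ge t\}$, a $\mu$-continuity set, so that Portmanteau gives $\mu_n((L^t)^c)\to\mu((L^t)^c)$. This is meant exactly to answer your objection that Lusin approximations do not control the $\mu_n$. That step is wrong. The paper asserts $L^t\uparrow L$ as $t\downarrow 0$, but $\bigcup_{t>0}L^t=\{[G,o]\in L: d_\ast([G,o],L^c)>0\}$ is only the interior of $L$, and $\mu(\partial L)$ need not be small.

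\textbf{The statement is false for merely measurable $x$.} So your hesitation is justified and your extra hypothesis is necessary. Take $H=\R$, $f([G,o],a)=a$, $h=\psi=0$, $\mu_n=\mu_{C_n}=\delta_{[C_n,1]}$ for $n\ge 3$, $\mu=\delta_{[\mathbb Z,0]}$, and $x=\mathbf 1_{\{[C_n,1]:\,n\ge 3\}}$. All hypotheses hold, with $D=2$, $L_f=L_h=0$, $\int\|x\|_H^{2+\eps}d\mu=0$ and $\sup_n\int\|x\|_H^{2+\eps}d\mu_n=1$. Yet $\Phi_n(x)=1$ for every $n$ while $\Phi(x)=0$. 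In this example, any compact $L$ with $\mu(L)>0$ and $x|_L$ continuous contains $[\mathbb Z,0]$ but only finitely many $[C_n,1]$. Hence $[\mathbb Z,0]\in\partial L$ and $\mu(L^t)=0$ for every $t>0$. What is true is the paper's Step~1 (continuous $x$) and your extension to $x$ continuous at $\mu$-a.e.\ point.

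\textbf{Three corrections to your extension.}
\begin{enumerate}
\item Your claim that continuity is ``automatic on the relevant atoms'' when finite graphs converge to a transitive graph is false. Finite rooted graphs are isolated points of $\mathcal G_\ast$, so continuity there is free. Continuity at the limit atom $[T,o]$, however, is a genuine restriction, and the example above is exactly of this type.
\item $g_x$ also depends on $x([G,v])$ for $v\sim o$, so you need continuity of $x$ at the neighbours as well. Let $N$ be the $\mu$-null discontinuity set of $x$ and apply the mass-transport principle to $T([G,o,v])=\mathbf 1_{\{v\sim o\}}\mathbf 1_N([G,v])$. This gives $\int\sum_{v\sim o}\mathbf 1_N([G,v])\,d\mu=\int\deg_G(o)\mathbf 1_N([G,o])\,d\mu=0$. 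Hence $g_x$ is continuous $\mu$-a.e., and the mapping theorem applies to the truncations $(-M)\vee g_x\wedge M$.
\item $\int|g_x|^{1+\eps/2}d\mu<\infty$ does not follow from ``the same computation under $\mu$''. The base-point moments in Assumption~\ref{ass:thermo} are assumed only under the $\mu_n$, and Assumption~\ref{ass:potential}(iv) is not among the hypotheses. Instead, as in the paper's Step~1, use $\int(|g_x|^{1+\eps/2}\wedge M)\,d\mu=\lim_n\int(|g_x|^{1+\eps/2}\wedge M)\,d\mu_n\le\sup_n\int|g_x|^{1+\eps/2}d\mu_n$ and let $M\to\infty$.
\end{enumerate}
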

Theorem~\ref{thm:thermo-limit} is proved in Section~\ref{sec:proof-conv}.
\begin{remark}
The convergence result in Theorem~\ref{thm:thermo-limit} applies to Examples~\ref{ex:cycles}, \ref{ex:boxes}, and \ref{ex:d-regular}, namely cycle graphs, boxes in \(\mathbb Z^d\), and random \(D\)-regular graphs. For the latter two examples, the local weak limit has a substantially simpler representative-player structure than the corresponding finite graphs, and in fact reduces to a single representative player. This shows that the thermodynamic limit may lead to a significant reduction in dimensionality when analyzing the corresponding potential functional and equilibrium conditions.
\end{remark}

\section{Examples}\label{sec:examples}

We first consider a static linear–quadratic game, which fits our framework and leads to a linear Poisson equation for quenched Nash equilibria. Under an assumption on the simple random walk operator associated with the graphs in the support of $\mu$, this allows the derivation of an explicit solution via the corresponding Green kernel. We then turn to a more general nonlinear potential game, where we characterize the equilibrium through a nonlinear Poisson equation.

\subsection{Linear--quadratic potential games and random walks on graphs}\label{sec:lq-example}

Assume that $\mu$ is a unimodular measure with \(\deg_G(o)>0\) for \(\mu\)-a.e.~$[G,o]$, such that Assumption~\ref{ass:degree-bound} holds. Let \(H=K=\mathbb R\), and let \(\eta\in L^2(\mu,\R)\). Define $f$, $h$, and $\psi$ from \eqref{eq:J} and \eqref{eq:psi} as follows, 
\begin{equation}\label{eq:lq-f}
f:\mathcal{G}_\ast\times\R\to\R,\quad f([G,o],a):=-\eta([G,o])\,a,
\end{equation}
and
\begin{equation}\label{eq:lq-h}
h:\mathcal{G}_{\ast\ast}\times\R\times\R\to\R,\quad h([G,o,v],a,b):=\frac12(a-b)^2.
\end{equation}
Set also
\begin{equation}\label{eq:lq-psi}
\psi:=h.
\end{equation}
Define the simple random walk operator $P$ on $\mathcal H$ by
\begin{equation}\label{eq:lq-P}
(Px)([G,o])
:=
\frac{1}{\deg_G(o)}\sum_{v\sim o}x([G,v]).
\end{equation}
Define the associated Laplacian by
\begin{equation}\label{eq:lq-Delta}
\Delta:=I-P,
\end{equation}
where \(I\) denotes the identity operator on \(\mathcal H\). 

\begin{proposition}\label{prop:lq-example}
For the choice \eqref{eq:lq-f}--\eqref{eq:lq-psi}, Assumptions \ref{ass:potential} and \ref{ass:convex} are satisfied. The corresponding potential functional is given by
\begin{equation}\label{eq:lq-potential}
\Phi(x)
=
\int_{\mathcal G_\ast}
\bigg[
-\eta([G,o])\,x([G,o])
+
\frac14\sum_{v\sim o}\big(x([G,o])-x([G,v])\big)^2
\bigg]\mu(d[G,o]),
\quad x\in\mathcal H.
\end{equation}
Consequently, if \(x^\ast\in\mathcal H\) satisfies
\begin{equation}\label{eq:lq-poisson}
(\Delta x^\ast)([G,o])=\frac{\eta([G,o])}{\deg_G(o)},
\quad \text{for } \mu\text{-a.e. } [G,o],
\end{equation}
then \(x^\ast\) is a quenched Nash equilibrium.
\end{proposition}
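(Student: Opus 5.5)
The plan is to verify Assumptions~\ref{ass:potential} and \ref{ass:convex} directly from the explicit formulas, substitute into \eqref{eq:Phi} to obtain \eqref{eq:lq-potential}, and then invoke Theorem~\ref{thm:main} via the implication (ii)$\Rightarrow$(iii) in the unconstrained case $K=H=\R$, where (ii) reduces to $F(x^\ast)=0$ as in \eqref{eq:unconstrained}.

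\textbf{Assumption~\ref{ass:potential}.} For (i)--(ii), $f([G,o],\cdot)$ is affine with $\nabla f([G,o],a)=-\eta([G,o])$, and $h$ is a polynomial with $\nabla_1 h([G,o,v],a,b)=a-b$. Since $\psi=h$, condition \eqref{eq:potential-id-1} is immediate. Condition \eqref{eq:potential-id-2} follows from
\[
\nabla_2\psi([G,o,v],a,b)=b-a=\nabla_1 h([G,v,o],b,a).
\]
This uses that $h$ does not depend on the graph argument and is symmetric. For (iii), $\nabla f$ is constant in $a$, so $L_f=0$ works, and $a-b$ is $1$-Lipschitz in each argument, so $L_h=1$ works. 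For (iv), take $a_\ast=0$. Then $f(\cdot,0)=0$, $\psi(\cdot,0,0)=0$ and $\nabla_1 h(\cdot,0,0)=0$, and $\int\|\nabla f(\cdot,0)\|^2d\mu=\|\eta\|_{L^2(\mu)}^2<\infty$.

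\textbf{Assumption~\ref{ass:convex}.} The map $a\mapsto -\eta a+\sum_{v\sim o}\tfrac12(a-b_v)^2$ is a sum of an affine function and convex quadratics, so it is convex. Formula \eqref{eq:lq-potential} is then just \eqref{eq:Phi} with $\tfrac12\cdot\tfrac12=\tfrac14$.

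\textbf{Nash equilibrium.} By \eqref{eq:F},
\[
F(x)([G,o])=-\eta([G,o])+\sum_{v\sim o}\bigl(x([G,o])-x([G,v])\bigr)=-\eta([G,o])+\deg_G(o)\,(\Delta x)([G,o]).
\]
Hence \eqref{eq:lq-poisson} gives $F(x^\ast)=0$ $\mu$-a.e., using $\deg_G(o)>0$. This is condition (ii) of Theorem~\ref{thm:main}, which by (ii)$\iff$(iii) yields that $x^\ast$ is a quenched Nash equilibrium.

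\textbf{Main technical point.} The only subtle step is the identity $(Px)([G,o])=\deg_G(o)^{-1}\sum_{v\sim o}x([G,v])$, i.e.\ that $P$ maps $\mathcal H$ into $\mathcal H$ so that $\Delta x^\ast$ is meaningful. This needs the mass-transport principle \eqref{eq:MTP} with $T([G,o,v])=\mathbf 1_{v\sim o}\|x([G,v])\|^2$, together with the bound $\deg\le D$, to obtain $\int\sum_{v\sim o}x([G,v])^2\,d\mu\le D\|x\|_{\mathcal H}^2$. This is routine, and it is the same estimate that makes $F:\mathcal H\to\mathcal H$ well defined.
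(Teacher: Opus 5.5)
Your proposal is correct and takes essentially the same route as the paper: verify Assumptions~\ref{ass:potential} and \ref{ass:convex} from the explicit formulas, read off \eqref{eq:lq-potential}, derive $F(x^\ast)=0$ from \eqref{eq:lq-poisson} using $\deg_G(o)>0$, and conclude with Theorem~\ref{thm:main}. The differences are minor: you check Assumption~\ref{ass:potential}(iv) explicitly with $a_\ast=0$, which the paper leaves implicit, and you use (ii)$\Rightarrow$(iii) directly where the paper goes through convexity of $\Phi$ and (ii)$\Rightarrow$(i)$\Rightarrow$(iii); also, the step you flag as the main subtlety (that $P$ maps $\mathcal H$ into $\mathcal H$) is not actually needed, since \eqref{eq:lq-poisson} is a pointwise condition and $F(x^\ast)=0$ lies in $\mathcal H$ trivially.
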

Proposition~\ref{prop:lq-example} is proved in Section~\ref{sec:proofs-examples}. Next, we study under which conditions the Poisson equation \eqref{eq:lq-poisson} admits an explicit solution. Fix a graph \(G\) in the support of \(\mu\). Consider the simple random walk on $G$ given by the Markov chain with state space $V(G)$ and transition probabilities
\be
p_G(v,u)=
\begin{cases}
    \frac{1}{\deg_G(v)} &\text{if }u\sim v,\\
    0 & \text{otherwise,}
\end{cases}
\quad v,u\in V(G)
\ee
(see \cite{woess2000random}, Chapter I, Section 1.C, Equation 1.19). Since $G$ is undirected and locally finite, the simple random walk is reversible with respect to the measure
\begin{equation}\label{eq:lq-mG}
m_G:V(G)\to(0,\infty),\quad m_G(v):=\deg_G(v),
\end{equation}
that is,
$$
m_G(v)p_G(v,u)=m_G(u)p_G(u,v),\quad\text{for all }v,u\in V(G)
$$
(see \cite{woess2000random}, Chapter I, Section 2.A, Equation 2.1). Let \(P_G=(p_G(v,u))_{v,u\in V(G)}\) denote the associated simple random walk operator on \(\ell^2(V(G),m_G)\), defined by
\begin{equation}\label{eq:lq-PG}
(P_G \xi)(v)
:=
\frac{1}{\deg_G(v)}\sum_{u\sim v}\xi(u),
\quad v\in V(G).
\end{equation}
Define the corresponding Laplacian on \(\ell^2(V(G),m_G)\) by
\begin{equation}\label{eq:lq-DeltaG}
\Delta_G:=I_G-P_G,
\end{equation}
where $I_G$ denotes the identity operator on \(\ell^2(V(G),m_G)\). Moreover, we define the Green kernel of the simple random walk by
\begin{equation}\label{eq:lq-green-kernel}
K_G(v,u)
:=
\sum_{n=0}^\infty p_G^{(n)}(v,u),
\quad v,u\in V(G),
\end{equation}
where \(p_G^{(n)}(v,u)\) denotes the probability that the simple random walk starting at $v$ arrives at $u$ after $n$ steps (see \cite{woess2000random}, Chapter I, Section 1.B, Equation 1.6). We note that the infinite series in \eqref{eq:lq-green-kernel} either converges or diverges simultaneously for all $v,u\in V(G)$ (see \cite{woess2000random}, Chapter I, Section 1.B, Lemma 1.7). Thus, we can define the so-called spectral radius of the simple random walk operator $P_G$ from \eqref{eq:lq-PG} as
\be\label{eq:lq-spectral-radius-def}
\rho(P_G):=\limsup_{n\to\infty}p_G^{(n)}(v,u)^{1/n}\in (0,1],
\ee
independent of the choice of $v,u\in V(G)$ (see \cite{woess2000random}, Chapter I, Section 1.B, Equation 1.8).

\begin{corollary}\label{cor:green}
Assume that for \(\mu\)-a.e.~\([G,o]\), the simple random walk operator \(P_G\) satisfies
\be\label{eq:lq-spectral-radius}
\rho(P_G)<1,
\ee
and that $\xi^G$ defined by
\be\label{eq:xi}
\xi^G_v:=\frac{\eta([G,v])}{\deg_G(v)},\quad v\in V(G),
\ee
belongs to \(\ell^2(V(G),m_G)\). Then the Green kernel \(K_G\) from \eqref{eq:lq-green-kernel} defines a bounded linear operator on \(\ell^2(V(G),m_G)\), and if \(x^\ast\in\mathcal H\) is given by \eqref{eq:xGv} with
\begin{equation}\label{eq:lq-green}
x^{\ast,G}_v
=
\sum_{u\in V(G)}K_G(v,u)\,\xi^G_u,
\quad v\in V(G),
\end{equation}
then \(x^\ast\) is a quenched Nash equilibrium.
\end{corollary}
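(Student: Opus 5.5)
The plan is to reduce the statement to Proposition~\ref{prop:lq-example}. It suffices to show three things: the Green kernel is a bounded operator on $\ell^2(V(G),m_G)$; the profile \eqref{eq:lq-green} solves $\Delta_G x^{\ast,G}=\xi^G$ on each $G$; and this profile comes from a measurable action rule in $\mathcal H$, so that \eqref{eq:lq-poisson} holds $\mu$-a.e.

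\textbf{Step 1 (operator bounds on a fixed graph).} By reversibility with respect to $m_G$, the operator $P_G$ is self-adjoint on $\ell^2(V(G),m_G)$. Its norm equals the spectral radius $\rho(P_G)$ defined in \eqref{eq:lq-spectral-radius-def}; this is the classical identity $\|P_G\|=\rho(P_G)$ for reversible walks, see \cite{woess2000random}, Chapter II, Lemma 10.1. Assumption \eqref{eq:lq-spectral-radius} then gives $\|P_G^n\|\le\rho(P_G)^n$. Hence the Neumann series $\sum_n P_G^n$ converges in operator norm to $(I_G-P_G)^{-1}=\Delta_G^{-1}$, with norm at most $(1-\rho(P_G))^{-1}$. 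The matrix entries of $P_G^n$ are $p_G^{(n)}(v,u)$. Pairing with $\delta_v$ and with $\xi^G$, the operator-norm convergence implies pointwise convergence of the series. So $(\Delta_G^{-1}\xi^G)(v)=\sum_n\sum_u p^{(n)}_G(v,u)\xi^G_u$, and this is the sum in \eqref{eq:lq-green}.

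The interchange of the sums over $n$ and $u$ needs justification when $\xi^G$ has mixed signs. I would first note that $K_G(v,u)<\infty$ by \eqref{eq:lq-spectral-radius} and the root test. Then I would argue for $|\xi^G|$: it lies in $\ell^2(V(G),m_G)$ as well, and all terms are nonnegative, so Tonelli applies. Applying the same Neumann-series argument to $|\xi^G|$ shows $\sum_u K_G(v,u)|\xi^G_u|<\infty$, and Fubini then handles $\xi^G$ itself. This gives boundedness of $K_G$ and the identity $\Delta_G x^{\ast,G}=\xi^G$, that is,
\[
x^{\ast,G}_v-\frac{1}{\deg_G(v)}\sum_{u\sim v}x^{\ast,G}_u=\frac{\eta([G,v])}{\deg_G(v)},\quad v\in V(G).
\]

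\textbf{Step 2 (consistency and measurability).} Since $K_G(v,u)$ and $\xi^G$ are defined intrinsically from the rooted structure, an isomorphism $\varphi:(G,v)\to(G',v')$ maps $K_G(v,\cdot)$ onto $K_{G'}(v',\varphi(\cdot))$. Therefore $x^\ast([G,o]):=\sum_u K_G(o,u)\xi^G_u$ is well defined on isomorphism classes. Measurability follows because each truncated sum $\sum_{n\le N}\sum_{d_G(o,u)\le n}p^{(n)}_G(o,u)\xi^G_u$ depends only on the ball $B_N(G,o)$ and on the measurable map $\eta$. The rule $x^\ast$ is a pointwise limit of these truncations, hence measurable. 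By construction $x^{\ast,G}_v=x^\ast([G,v])$, consistent with \eqref{eq:xGv}.

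\textbf{Step 3 ($x^\ast\in\mathcal H$ and conclusion).} The Laplacian identity of Step 1 at $v=o$ is exactly \eqref{eq:lq-poisson}, holding $\mu$-a.e. by the hypotheses. Once $x^\ast\in\mathcal H$ is known, Proposition~\ref{prop:lq-example} yields that $x^\ast$ is a quenched Nash equilibrium.

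I expect proving $x^\ast\in L^2(\mu)$ to be the main obstacle. The graph-wise $\ell^2(m_G)$ bound from Step 1 does not by itself control $\int|x^\ast([G,o])|^2\,d\mu$. The first thing I would try is to write $x^\ast=\sum_n P^n(\eta/\deg)$ on $\mathcal H$ and invoke the mass-transport principle \eqref{eq:MTP}. Under the degree-biased measure $\deg_G(o)\mu/\mathbb E_\mu[\deg]$, the operator $P$ is self-adjoint and a contraction on $\mathcal H$. However, a uniform bound $\|P\|<1$ would need a uniform spectral gap, which may fail, so this argument could break down. If it does, I would read the hypothesis ``$x^\ast\in\mathcal H$'' in the statement as an assumption (as the wording ``if $x^\ast\in\mathcal H$ is given by'' suggests). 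In that case only Steps 1--2 and the application of Proposition~\ref{prop:lq-example} are needed.
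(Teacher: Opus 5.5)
Your proposal is correct and follows the paper's route. On each graph with $\rho(P_G)<1$ you identify the Green kernel with $(I_G-P_G)^{-1}=\Delta_G^{-1}$ on $\ell^2(V(G),m_G)$ via reversibility, read off $\Delta_G x^{\ast,G}=\xi^G$, evaluate at the root to get \eqref{eq:lq-poisson}, and conclude with Proposition~\ref{prop:lq-example}. The paper cites Woess (Theorem~10.3 and Equation~2.7) where you supply the Neumann-series and Fubini details, and, as you suspected, it takes $x^\ast\in\mathcal H$ (measurability included) as part of the hypothesis rather than proving it.
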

Corollary~\ref{cor:green} is proved in Section~\ref{sec:proofs-examples}.
\begin{remark}
Conditions for a given graph $G$ such that its spectral radius satisfies \(\rho(P_G)<1\)  as in \eqref{eq:lq-spectral-radius} have been studied extensively in the literature on random walks. A necessary (but not sufficient) condition for \eqref{eq:lq-spectral-radius} is that $G$ is infinite and has exponential growth (see \cite{woess2000random}, Chapter~II, Section~10.B). In particular, \eqref{eq:lq-spectral-radius} fails for graphs of subexponential growth, such as P\'olya's walk on $\mathbb{Z}^d$ (see \cite{woess2000random}, Chapter~I, Section~1.A). On the other hand, there are large classes of infinite graphs for which \eqref{eq:lq-spectral-radius} does hold. For instance, any locally finite tree with minimum degree \(2\) satisfies the spectral radius condition whenever the lengths of its unbranched paths are uniformly bounded (see \cite{woess2000random}, Chapter II, Section 10.C, Theorem~10.9). In particular, the infinite $D$-regular tree from Example~\ref{ex:d-regular} satisfies \eqref{eq:lq-spectral-radius} whenever $D\geq 3$. See also Section~10.C in Chapter~II of \cite{woess2000random} for further examples.
\end{remark}

\subsection{Nonlinear potential games}

We now generalize the previous example to a nonlinear potential game. Assume that \(\mu\) is a unimodular measure with \(\deg_G(o)>0\) for \(\mu\)-a.e.~$[G,o]$, and that Assumption~\ref{ass:degree-bound} holds. Let \(H=K=\mathbb R\), and let \(\eta\in L^2(\mu,\mathbb R)\). Let \(W:\mathbb R\to\mathbb R\) be a convex, even, continuously differentiable function. Define $f,h$ and $\psi$ from \eqref{eq:J}, and \eqref{eq:psi} as follows, 
\begin{equation}\label{eq:W-f}
f:\mathcal G_\ast\times\mathbb R\to\mathbb R,
\quad
f([G,o],a):=-\eta([G,o])\,a,
\end{equation}
and
\begin{equation}\label{eq:W-h}
h:\mathcal G_{\ast\ast}\times\mathbb R\times\mathbb R\to\mathbb R,
\quad
h([G,o,v],a,b):=W(a-b),
\end{equation}
with $\psi:=h$.
Define the nonlinear Laplacian \(\Delta_W\) on \(\mathcal H\) by 
\begin{equation}\label{eq:W-Delta}
(\Delta_W x)([G,o])
:=
\frac1{\deg_G(o)}
\sum_{v\sim o}W'\!\big(x([G,o])-x([G,v])\big).
\end{equation}
\begin{remark}
When moving from the linear-quadratic to the nonlinear formulation, Assumption~\ref{ass:potential}(i), Assumption~\ref{ass:potential}(ii), Assumption~\ref{ass:potential}(iv), and Assumption~\ref{ass:convex} remain satisfied. In particular, the evenness of $W$ ensures that the potential identities \eqref{eq:potential-id-1}--\eqref{eq:potential-id-2} hold with $\psi=h$. However, the global Lipschitz condition \eqref{eq:lip-h} is satisfied if and only if \(W'\) is globally Lipschitz on \(\mathbb R\).
\end{remark}
The following result generalizes Proposition~\ref{prop:lq-example} to the nonlinear case.
\begin{proposition}\label{prop:W-example}
If \(x^\ast\in\mathcal H\) satisfies
\begin{equation}\label{eq:W-poisson}
(\Delta_W x^\ast)([G,o])=\frac{\eta([G,o])}{\deg_G(o)},
\quad \text{for } \mu\text{-a.e. } [G,o],
\end{equation}
then \(x^\ast\) is a quenched Nash equilibrium.
\end{proposition}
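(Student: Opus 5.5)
The plan is to verify the quenched Nash condition \eqref{eq:Nash} directly, root by root, using convexity of the root's cost in its own action. We deliberately avoid going through Theorem~\ref{thm:main}: the remark preceding the proposition notes that the Lipschitz condition \eqref{eq:lip-h} may fail when $W'$ is not globally Lipschitz, so Propositions~\ref{prop:Phi-finite}--\ref{prop:Phi-Gateaux} are not available in general.

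Fix $[G,o]$ in the full-measure set on which \eqref{eq:W-poisson} holds, and set $x_v:=x^{\ast,G}_v=x^\ast([G,v])$. The root's cost as a function of its own action $a\in\mathbb R$ is
\[
g(a):=J_o^G(a,x_{-o}^G)=-\eta([G,o])\,a+\sum_{v\sim o}W(a-x_v).
\]
This is a finite sum, since $\deg_G(o)\le D$. Each term $a\mapsto W(a-x_v)$ is convex and $C^1$, and the linear term is convex, so $g$ is a convex $C^1$ function on $\mathbb R$.

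We then compute the derivative:
\[
g'(a)=-\eta([G,o])+\sum_{v\sim o}W'(a-x_v).
\]
At $a=x_o$, the definition \eqref{eq:W-Delta} gives
\[
g'(x_o)=\deg_G(o)\,(\Delta_W x^\ast)([G,o])-\eta([G,o]).
\]
This vanishes by \eqref{eq:W-poisson}, because $\deg_G(o)>0$ lets us multiply the equation through.

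For a convex differentiable function on $\mathbb R$, a critical point is a global minimizer. Indeed,
\[
g(a)\ge g(x_o)+g'(x_o)(a-x_o)=g(x_o)\quad\text{for all } a\in\mathbb R=K.
\]
Hence $J_o^G(x^{\ast,G})\le J_o^G(a,x^{\ast,G}_{-o})$ for every $a$ and $\mu$-a.e. $[G,o]$, which is \eqref{eq:Nash}. Note also that $x^\ast\in\mathcal K=\mathcal H$, since $K=\mathbb R$.

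Two points need care, though neither is a real obstacle. The first is measurability and the ``a.e.'' bookkeeping: the equation is imposed only at the root, but that is exactly what \eqref{eq:Nash} requires, so no use of the mass-transport principle is needed. The second is that the neighbor actions $x_v=x^\ast([G,v])$ are well defined for every $v$, since $x^\ast$ is a function on $\mathcal G_\ast$. Evenness of $W$ plays no role in this argument; it would matter only for the potential identities linking the equilibrium to $\Phi$.
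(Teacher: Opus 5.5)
Your proposal is correct and follows essentially the same route as the paper: fix $[G,o]$ in the full-measure set where \eqref{eq:W-poisson} holds, note that the root's cost $a\mapsto -\eta([G,o])\,a+\sum_{v\sim o}W(a-x^\ast([G,v]))$ is convex and differentiable with vanishing derivative at $x^\ast([G,o])$, and conclude that this critical point is a global minimizer, without invoking Theorem~\ref{thm:main}. Your remarks are consistent with the paper's argument as well: evenness of $W$ is not needed here, and the global Lipschitz condition on $W'$ is bypassed.
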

Proposition~\ref{prop:W-example} is proved in Section~\ref{sec:proofs-examples}.
\begin{remark}
The choice
\[
W(s)=\frac1p|s|^p,
\quad p>1,
\]
yields a \(p\)-Laplacian potential game. In this case, the nonlinear Laplacian \eqref{eq:W-Delta} becomes
\[
(\Delta_p x)([G,o])
:=
\frac1{\deg_G(o)}
\sum_{v\sim o}
|x([G,o])-x([G,v])|^{p-2}\big(x([G,o])-x([G,v])\big),
\]
and the equilibrium can be characterized through the nonlinear Poisson equation
\be\label{eq:lq-poisson-nonlinear}
(\Delta_p x^\ast)([G,o])
=
\frac{\eta([G,o])}{\deg_G(o)},
\quad \text{for } \mu\text{-a.e. } [G,o].
\ee
If \(p=2\), \eqref{eq:lq-poisson-nonlinear} reduces exactly to the linear Poisson equation \eqref{eq:lq-poisson} from the linear--quadratic example in Section~\ref{sec:lq-example}.
\end{remark}

\section{Proofs of Propositions~\ref{prop:Phi-finite} and \ref{prop:Phi-Gateaux}}\label{sec:proofs-Phi}

This section is devoted to the proofs of the results regarding the well-definedness and first variation of the potential functional $\Phi$ from \eqref{eq:Phi}.

\begin{proof}[Proof of Proposition~\ref{prop:Phi-finite}]
Recalling \eqref{eq:H}, fix \(x\in\mathcal H\), and write
\[
x_o:=x([G,o]), \quad x_v:=x([G,v]).
\]
Let $a_\ast$ be as in Assumption \ref{ass:potential}(iv). First, we prove that the first term in \eqref{eq:Phi} is integrable. Consider the path given by
\[
\gamma:[0,1]\to H,\quad \gamma(t):=a_\ast+t(x_o-a_\ast),
\]
and define the composition $g(t):=f([G,o],\gamma(t))$. Since $a\mapsto f([G,o],a)$ is G\^ateaux differentiable by Assumption~\ref{ass:potential}(i), we have for every \(t\in(0,1)\),
\be\begin{aligned}\label{eq:curve-derivative}
g'(t)
&=
\lim_{s\to 0}\frac{g(t+s)-g(t)}{s} \\
&=
\lim_{s\to 0}\frac{
f([G,o],\gamma(t+s))-f([G,o],\gamma(t))
}{s} \\
&=
\lim_{s\to 0}\frac{
f([G,o],\gamma(t)+s(x_o-a_\ast))-f([G,o],\gamma(t))
}{s} \\
&=
\left\langle
\nabla f([G,o],\gamma(t)),\,x_o-a_\ast
\right\rangle_H.
\end{aligned}\ee
The fundamental theorem of calculus therefore yields
\be\label{eq:ftc}
f([G,o],x_o)-f([G,o],a_\ast)
=
\int_0^1
\left\langle
\nabla f\big([G,o],a_\ast+t(x_o-a_\ast)\big),\,x_o-a_\ast
\right\rangle_Hdt.
\ee
Hence, by the Cauchy--Schwarz inequality,
\begin{equation} \label{eq:f-bound-start}
|f([G,o],x_o)|
\le
|f([G,o],a_\ast)|
+
\int_0^1
\left\|
\nabla f\big([G,o],a_\ast+t(x_o-a_\ast)\big)
\right\|_H
\|x_o-a_\ast\|_Hdt.
\end{equation}
Next, by \eqref{eq:lip-f} it holds that
\be\label{eq:nabla-f-bound}
\left\|
\nabla f\big([G,o],a_\ast+t(x_o-a_\ast)\big)
\right\|_H
\le
L_f\,t\|x_o-a_\ast\|_H+\|\nabla f([G,o],a_\ast)\|_H.
\ee
Substituting \eqref{eq:nabla-f-bound} into \eqref{eq:f-bound-start} yields
\begin{equation} \label{eq:f-bound}
|f([G,o],x_o)|
\le
|f([G,o],a_\ast)|
+
\frac{L_f}{2}\|x_o-a_\ast\|_H^2
+
\|\nabla f([G,o],a_\ast)\|_H\,\|x_o-a_\ast\|_H.
\end{equation}
Integrating \eqref{eq:f-bound} with respect to $\mu$, we obtain
\be\begin{aligned}\label{eq:f-L1}
\int_{\mathcal G_\ast}|f([G,o],x_o)|\mu(d[G,o])
&\le
\int_{\mathcal G_\ast}|f([G,o],a_\ast)|\mu(d[G,o])
+
\frac{L_f}{2}\int_{\mathcal G_\ast}\|x_o-a_\ast\|_H^2\mu(d[G,o])
 \\
&\quad
+
\int_{\mathcal G_\ast}\|\nabla f([G,o],a_\ast)\|_H\,\|x_o-a_\ast\|_H\mu(d[G,o]).
\end{aligned}\ee
The first term on the right-hand side of \eqref{eq:f-L1} is finite by \eqref{eq:int-base-f}. The second term is finite since \(x\in\mathcal H\). Finally, the third term is finite by the Cauchy--Schwarz inequality and \eqref{eq:int-base-f}. Hence
\begin{equation} \label{eq:f-int-finite}
\int_{\mathcal G_\ast}|f([G,o],x_o)|\mu(d[G,o])<\infty.
\end{equation}
Next, we prove that the second term on the right-hand side of \eqref{eq:Phi} is integrable. By \eqref{eq:potential-id-1}, for any \(a,b\in H\),
\[
\|\nabla_1\psi([G,o,v],a,b)\|_H
=
\|\nabla_1 h([G,o,v],a,b)\|_H.
\]
Using \eqref{eq:lip-h}, we obtain
\be\begin{aligned}\label{eq:nabla-psi-bound1}
&\|\nabla_1\psi([G,o,v],a,b)\|_H\\
&\le
\|\nabla_1 h([G,o,v],a,b)-\nabla_1 h([G,o,v],a_\ast,a_\ast)\|_H
+
\|\nabla_1 h([G,o,v],a_\ast,a_\ast)\|_H \\
&\le
L_h\big(\|a-a_\ast\|_H+\|b-a_\ast\|_H\big)
+
\|\nabla_1 h([G,o,v],a_\ast,a_\ast)\|_H.
\end{aligned}\ee
Similarly, by \eqref{eq:potential-id-2} and \eqref{eq:lip-h},
\be\begin{aligned}\label{eq:nabla-psi-bound2}
\|\nabla_2\psi([G,o,v],a,b)\|_H
&=
\|\nabla_1 h([G,v,o],b,a)\|_H \\
&\le
L_h\big(\|a-a_\ast\|_H+\|b-a_\ast\|_H\big)
+
\|\nabla_1 h([G,v,o],a_\ast,a_\ast)\|_H.
\end{aligned}\ee
Now consider the path given by
\[
\gamma:[0,1]\to H\times H,\quad\gamma(t):=\big(a_\ast+t(x_o-a_\ast),\,a_\ast+t(x_v-a_\ast)\big).
\]
Arguing exactly as in \eqref{eq:curve-derivative} and \eqref{eq:ftc}, we obtain from the fundamental theorem of calculus,
\be\begin{aligned}
&\psi([G,o,v],x_o,x_v)-\psi([G,o,v],a_\ast,a_\ast) \\
&=
\int_0^1
\left\langle
\nabla_1\psi\big([G,o,v],\gamma(t)\big),\,x_o-a_\ast
\right\rangle_Hdt
+
\int_0^1
\left\langle
\nabla_2\psi\big([G,o,v],\gamma(t)\big),\,x_v-a_\ast
\right\rangle_Hdt.
\end{aligned}\ee
Hence, by the Cauchy-Schwarz inequality,
\be\begin{aligned}\label{eq:psi-bound-start}
|\psi([G,o,v],x_o,x_v)|
&\le
|\psi([G,o,v],a_\ast,a_\ast)|
+
\int_0^1
\|\nabla_1\psi([G,o,v],\gamma(t))\|_H\,\|x_o-a_\ast\|_Hdt
\\
&\quad
+
\int_0^1
\|\nabla_2\psi([G,o,v],\gamma(t))\|_H\,\|x_v-a_\ast\|_Hdt.
\end{aligned}\ee
Using \eqref{eq:nabla-psi-bound1} and \eqref{eq:nabla-psi-bound2}, we get from \eqref{eq:psi-bound-start},
\be\begin{aligned}\label{eq:psi-bound}
|\psi([G,o,v],x_o,x_v)|
&\le
|\psi([G,o,v],a_\ast,a_\ast)|
+
L_h\big(\|x_o-a_\ast\|_H+\|x_v-a_\ast\|_H\big)\|x_o-a_\ast\|_H
 \\
&\quad
+
L_h\big(\|x_o-a_\ast\|_H+\|x_v-a_\ast\|_H\big)\|x_v-a_\ast\|_H
 \\
&\quad
+
\|\nabla_1 h([G,o,v],a_\ast,a_\ast)\|_H\,\|x_o-a_\ast\|_H
 \\
&\quad
+
\|\nabla_1 h([G,v,o],a_\ast,a_\ast)\|_H\,\|x_v-a_\ast\|_H\\
&\le
|\psi([G,o,v],a_\ast,a_\ast)|
+
2L_h\|x_o-a_\ast\|_H^2
+
2L_h\|x_v-a_\ast\|_H^2
 \\
&\quad
+
\|\nabla_1 h([G,o,v],a_\ast,a_\ast)\|_H\,\|x_o-a_\ast\|_H
 \\
&\quad
+
\|\nabla_1 h([G,v,o],a_\ast,a_\ast)\|_H\,\|x_v-a_\ast\|_H.
\end{aligned}\ee
Summing \eqref{eq:psi-bound} over \(v\sim o\) and integrating with respect to $\mu$, we obtain
\be\begin{aligned}\label{eq:psi-L1}
&\int_{\mathcal G_\ast}\sum_{v\sim o}|\psi([G,o,v],x_o,x_v)|\mu(d[G,o])
 \\
&\le
\int_{\mathcal G_\ast}\sum_{v\sim o}|\psi([G,o,v],a_\ast,a_\ast)|\mu(d[G,o])
 \\
&\quad
+
2L_h\int_{\mathcal G_\ast}\sum_{v\sim o}\|x_o-a_\ast\|_H^2\mu(d[G,o])
+
2L_h\int_{\mathcal G_\ast}\sum_{v\sim o}\|x_v-a_\ast\|_H^2\mu(d[G,o])
 \\
&\quad
+
\int_{\mathcal G_\ast}\sum_{v\sim o}\|\nabla_1 h([G,o,v],a_\ast,a_\ast)\|_H\,\|x_o-a_\ast\|_H\mu(d[G,o])
 \\
&\quad
+
\int_{\mathcal G_\ast}\sum_{v\sim o}\|\nabla_1 h([G,v,o],a_\ast,a_\ast)\|_H\,\|x_v-a_\ast\|_H\mu(d[G,o]).
\end{aligned}\ee
The first term on the right-hand side of \eqref{eq:psi-L1} is finite by \eqref{eq:int-base-psi}. For the second term, the degree bound \eqref{eq:degree-bound} and the fact that $x\in\mathcal{H}$ yield
\be\label{eq:second-term}
\int_{\mathcal G_\ast}\sum_{v\sim o}\|x_o-a_\ast\|_H^2\mu(d[G,o])
\le
D\int_{\mathcal G_\ast}\|x_o-a_\ast\|_H^2\mu(d[G,o])<\infty.
\ee
For the third term, the mass-transport principle \eqref{eq:MTP} applied to the transport function $T([G,o,v])=\mathbf 1_{\{o\sim v\}}\|x_o-a_\ast\|_H^2$ and \eqref{eq:second-term} give
\[
\int_{\mathcal G_\ast}\sum_{v\sim o}\|x_v-a_\ast\|_H^2\mu(d[G,o])
=
\int_{\mathcal G_\ast}\sum_{v\sim o}\|x_o-a_\ast\|_H^2\mu(d[G,o])<\infty.
\]
For the fourth term, the Cauchy--Schwarz inequality yields
\be\begin{aligned}\label{eq:fourth-term}
&\int_{\mathcal G_\ast}\sum_{v\sim o}\|\nabla_1 h([G,o,v],a_\ast,a_\ast)\|_H\,\|x_o-a_\ast\|_H\mu(d[G,o]) \\
&\le
\bigg(
\int_{\mathcal G_\ast}\sum_{v\sim o}\|\nabla_1 h([G,o,v],a_\ast,a_\ast)\|_H^2\mu(d[G,o])
\bigg)^{1/2}
\bigg(
\int_{\mathcal G_\ast}\sum_{v\sim o}\|x_o-a_\ast\|_H^2\mu(d[G,o])
\bigg)^{1/2},
\end{aligned}\ee
which is finite by \eqref{eq:int-base-h} and \eqref{eq:second-term}. For the last term, the mass-transport principle \eqref{eq:MTP} and \eqref{eq:int-base-h} give
\[
\int_{\mathcal G_\ast}\sum_{v\sim o}\|\nabla_1 h([G,v,o],a_\ast,a_\ast)\|_H^2\mu(d[G,o])
\hspace{-0.6mm}=\hspace{-0.7mm}
\int_{\mathcal G_\ast}\sum_{v\sim o}\|\nabla_1 h([G,o,v],a_\ast,a_\ast)\|_H^2\mu(d[G,o])<\infty,
\]
and therefore the same Cauchy--Schwarz argument as in \eqref{eq:fourth-term} applies. Hence
\begin{equation} \label{eq:psi-int-finite}
\int_{\mathcal G_\ast}\sum_{v\sim o}|\psi([G,o,v],x_o,x_v)|\mu(d[G,o])<\infty.
\end{equation}
Combining \eqref{eq:f-int-finite} and \eqref{eq:psi-int-finite}, we conclude that
\[
\int_{\mathcal G_\ast}
\Big|
f([G,o],x_o)
+
\frac12\sum_{v\sim o}\psi([G,o,v],x_o,x_v)
\Big|
\mu(d[G,o])<\infty.
\]
Hence the integrand in \eqref{eq:Phi} is absolutely integrable, and therefore \(\Phi(x)\in\mathbb R\).
\end{proof}

We continue with the proof of Proposition~\ref{prop:Phi-Gateaux}, where we compute the G\^ateaux derivative of $\Phi$.

\begin{proof}[Proof of Proposition~\ref{prop:Phi-Gateaux}]
Recalling \eqref{eq:H}, fix \(x,{y}\in\mathcal H\), and write
\[
x_o:=x([G,o]), \quad x_v:=x([G,v]), \quad {y}_o:={y}([G,o]), \quad {y}_v:={y}([G,v]).
\]
For $t\in\R$, define $\Xi:\mathcal{G}_\ast\times\R\to\R$ by
\be\label{eq:Xi}
\Xi_t([G,o])
:=
f([G,o],x_o+t{y}_o)
+
\frac12\sum_{v\sim o}\psi([G,o,v],x_o+t{y}_o,x_v+t{y}_v).
\ee
Then, by definition of $\Phi$ in \eqref{eq:Phi},
\be\label{eq:Phi-Xi}
\Phi(x+t{y})=\int_{\mathcal G_\ast}\Xi_t([G,o])\mu(d[G,o]).
\ee
Since \(a\mapsto f([G,o],a)\) and \((a,b)\mapsto \psi([G,o,v],a,b)\) are G\^ateaux differentiable by Assumption \ref{ass:potential}, for every fixed \([G,o]\) we have by \eqref{eq:Xi} and a similar argument as in \eqref{eq:curve-derivative},
\be\begin{aligned}\label{eq:Xi'}
\frac{d}{dt}\Xi_t([G,o])
&=
\left\langle \nabla f([G,o],x_o+t{y}_o),{y}_o\right\rangle_H \\
&\quad
+
\frac12\sum_{v\sim o}
\left\langle \nabla_1\psi([G,o,v],x_o+t{y}_o,x_v+t{y}_v),{y}_o\right\rangle_H \\
&\quad
+
\frac12\sum_{v\sim o}
\left\langle \nabla_2\psi([G,o,v],x_o+t{y}_o,x_v+t{y}_v),{y}_v\right\rangle_H .
\end{aligned}\ee
Next, by \eqref{eq:lip-f} it holds for any $a\in H$ that
\be\label{eq:first-var-f-growth}
\|\nabla f([G,o],a)\|_H
\le
L_f\|a-a_\ast\|_H+\|\nabla f([G,o],a_\ast)\|_H.
\ee
 Likewise, by \eqref{eq:potential-id-1} and \eqref{eq:lip-h}, it holds for any $a,b\in H$ that
\be\begin{aligned}
\|\nabla_1\psi([G,o,v],a,b)\|_H
&=
\|\nabla_1 h([G,o,v],a,b)\|_H
 \\
&\le
L_h\big(\|a-a_\ast\|_H+\|b-a_\ast\|_H\big)
+
\|\nabla_1 h([G,o,v],a_\ast,a_\ast)\|_H,
\label{eq:first-var-psi1-growth}
\end{aligned}\ee
and similarly, by \eqref{eq:potential-id-2} and \eqref{eq:lip-h},
\be\begin{aligned}
\|\nabla_2\psi([G,o,v],a,b)\|_H
&=
\|\nabla_1 h([G,v,o],b,a)\|_H
 \\
&\le
L_h\big(\|a-a_\ast\|_H+\|b-a_\ast\|_H\big)
+
\|\nabla_1 h([G,v,o],a_\ast,a_\ast)\|_H.
\label{eq:first-var-psi2-growth}
\end{aligned}\ee
Assuming $|t|\leq 1$, it holds that
\[
\|x_o+t{y}_o\|_H \le \|x_o\|_H+\|{y}_o\|_H,
\quad
\|x_v+t{y}_v\|_H \le \|x_v\|_H+\|{y}_v\|_H.
\]
Therefore, using \eqref{eq:Xi'}, \eqref{eq:first-var-f-growth}, \eqref{eq:first-var-psi1-growth}, \eqref{eq:first-var-psi2-growth}, and the Cauchy-Schwarz inequality, we obtain 
\be\begin{aligned}\label{eq:first-var-dom-1}
\bigg|\frac{d}{dt}\Xi_t([G,o])\bigg|
&\le
C\big(1+\|x_o\|_H+\|{y}_o\|_H\big)\|{y}_o\|_H
+
\|\nabla f([G,o],a_\ast)\|_H\,\|{y}_o\|_H
\\
&\quad
+
C\sum_{v\sim o}
\big(1+\|x_o\|_H+\|x_v\|_H+\|{y}_o\|_H+\|{y}_v\|_H\big)\|{y}_o\|_H
\\
&\quad
+
\frac12\sum_{v\sim o}\|\nabla_1 h([G,o,v],a_\ast,a_\ast)\|_H\,\|{y}_o\|_H
\\
&\quad
+
C\sum_{v\sim o}
\big(1+\|x_o\|_H+\|x_v\|_H+\|{y}_o\|_H+\|{y}_v\|_H\big)\|{y}_v\|_H
\\
&\quad
+
\frac12\sum_{v\sim o}\|\nabla_1 h([G,v,o],a_\ast,a_\ast)\|_H\,\|{y}_v\|_H.
\end{aligned}\ee
for a positive constant \(C<\infty\) not depending on $|t|\leq 1$. Using Young's inequality and the degree bound \eqref{eq:degree-bound}, it follows from \eqref{eq:first-var-dom-1} that
\begin{align}
\left|\frac{d}{dt}\Xi_t([G,o])\right|
&\le
C'\bigg(
1+\|x_o\|_H^2+\|{y}_o\|_H^2+\sum_{v\sim o}\|x_v\|_H^2+\sum_{v\sim o}\|{y}_v\|_H^2
\bigg) + 2\|\nabla f([G,o],a_\ast)\|_H^2\\
&\quad
+
\sum_{v\sim o}\|\nabla_1 h([G,o,v],a_\ast,a_\ast)\|_H^2
+
\sum_{v\sim o}\|\nabla_1 h([G,v,o],a_\ast,a_\ast)\|_H^2
\label{eq:first-var-dom-2}
\end{align}
for a positive constant \(C'<\infty\) not depending on $|t|\leq 1$. We claim that the right-hand side of \eqref{eq:first-var-dom-2} is integrable with respect to \(\mu\). Indeed, \(x,{y}\in\mathcal H\) and by the mass-transport principle \eqref{eq:MTP},
\[
\int_{\mathcal G_\ast}\sum_{v\sim o}\|x_v\|_H^2\mu(d[G,o])
=
\int_{\mathcal G_\ast}\sum_{v\sim o}\|x_o\|_H^2\mu(d[G,o])
\le
D\|x\|_{\mathcal H}^2<\infty,
\]
and similarly
\[
\int_{\mathcal G_\ast}\sum_{v\sim o}\|{y}_v\|_H^2\mu(d[G,o])
=
\int_{\mathcal G_\ast}\sum_{v\sim o}\|y_o\|_H^2\mu(d[G,o])
\le
D\|{y}\|_{\mathcal H}^2<\infty,
\]
thus the first term on the right-hand side of \eqref{eq:first-var-dom-2} is integrable. For the other three terms, \eqref{eq:int-base-f}, \eqref{eq:int-base-h} and a further application of the mass-transport principle \eqref{eq:MTP} give
\[
\int_{\mathcal G_\ast}\|\nabla f([G,o],a_\ast)\|_H^2\mu(d[G,o])<\infty
\]
and
\[
\int_{\mathcal G_\ast}\sum_{v\sim o}\|\nabla_1 h([G,v,o],a_\ast,a_\ast)\|_H^2\mu(d[G,o])\hspace{-0.6mm}=\hspace{-0.7mm}\int_{\mathcal G_\ast}\sum_{v\sim o}\|\nabla_1 h([G,o,v],a_\ast,a_\ast)\|_H^2\mu(d[G,o])<\infty.
\]
Hence the right-hand side of \eqref{eq:first-var-dom-2} is integrable. 
Define the difference quotient
\be\label{eq:Qt}
Q_t([G,o])
:=
\frac{\Xi_t([G,o])-\Xi_0([G,o])}{t},
\quad t \in [-1,1] \setminus \{0\}.
\ee
Since, for every fixed \([G,o]\), the map \(t\mapsto \Xi_t([G,o])\) is differentiable by \eqref{eq:Xi'}, the fundamental theorem of calculus yields
\[
Q_t([G,o])
=
\frac{1}{t}\int_0^t \frac{d}{dr}\Xi_{r}([G,o])dr.
\]
Hence, by \eqref{eq:first-var-dom-2}, 
\[
|Q_t([G,o])|
\le
\sup_{|r|\le 1}\left|\frac{d}{dr}\Xi_r([G,o])\right|
\le
M([G,o]),
\quad \textrm{for all } t \in [-1,1] \setminus \{0\},
\]
where \(M([G,o])\) denotes the $\mu$-integrable right-hand side of \eqref{eq:first-var-dom-2}. 
Therefore, by the dominated convergence theorem,
\be\label{eq:dct}
\lim_{t\to 0}
\int_{\mathcal G_\ast} Q_t([G,o])\mu(d[G,o])
=
\int_{\mathcal G_\ast}
\frac{d}{dt}\Xi_t([G,o])\bigg|_{t=0}\,
\mu(d[G,o]).
\ee
Since by \eqref{eq:Phi-Xi} and \eqref{eq:Qt},
\[
\int_{\mathcal G_\ast} Q_t([G,o])\mu(d[G,o])
=
\frac{\Phi(x+ty)-\Phi(x)}{t},
\]
\eqref{eq:dct} shows that differentiation under the integral sign for $\Phi$ in \eqref{eq:Phi} is justified. Consequently, by \eqref{eq:potential-id-1}, \eqref{eq:potential-id-2}, and \eqref{eq:Xi'},
\be\begin{aligned}
\langle \nabla\Phi(x),y\rangle_\mathcal{H}
&=
\int_{\mathcal G_\ast}
\left\langle \nabla f([G,o],x_o),{y}_o\right\rangle_H
\mu(d[G,o])
 \\
&\quad
+
\frac12
\int_{\mathcal G_\ast}
\sum_{v\sim o}
\left\langle \nabla_1\psi([G,o,v],x_o,x_v),{y}_o\right\rangle_H
\mu(d[G,o])
 \\
&\quad
+
\frac12
\int_{\mathcal G_\ast}
\sum_{v\sim o}
\left\langle \nabla_2\psi([G,o,v],x_o,x_v),{y}_v\right\rangle_H
\mu(d[G,o]).
\\
&=
\int_{\mathcal G_\ast}
\left\langle \nabla f([G,o],x_o),{y}_o\right\rangle_H
\mu(d[G,o])
 \\
&\quad
+
\frac12
\int_{\mathcal G_\ast}
\sum_{v\sim o}
\left\langle \nabla_1 h([G,o,v],x_o,x_v),{y}_o\right\rangle_H
\mu(d[G,o])
 \\
&\quad
+
\frac12
\int_{\mathcal G_\ast}
\sum_{v\sim o}
\left\langle \nabla_1 h([G,v,o],x_v,x_o),{y}_v\right\rangle_H
\mu(d[G,o]).
\label{eq:proof-first-var-2}
\end{aligned}\ee
Now define the mass transport
\be\label{eq:T}
T([G,o,v])
:=
\mathbf 1_{\{o\sim v\}}
\left\langle \nabla_1 h([G,o,v],x_o,x_v),{y}_o\right\rangle_H.
\ee
By \eqref{eq:first-var-psi1-growth} and the Cauchy-Schwarz inequality,
\[
|T([G,o,v])|
\le
\mathbf 1_{\{o\sim v\}}
\Big(
L_h(\|x_o-a_\ast\|_H+\|x_v-a_\ast\|_H)
+
\|\nabla_1 h([G,o,v],a_\ast,a_\ast)\|_H
\Big)\|{y}_o\|_H.
\]
Using $x,y\in\mathcal{H}$, the degree bound \eqref{eq:degree-bound}, \eqref{eq:int-base-h}, and the Cauchy-Schwarz inequality, we conclude that
\be
\int_{\mathcal{G}_\ast}\sum_{v\in V(G)}|T([G,o,v])|\mu(d[G,o])<\infty.
\ee
Hence, the mass-transport principle applies (see \cite{Aldous2007}, Section~2), and gives
\[
\int_{\mathcal G_\ast}\sum_{v\sim o} T([G,o,v])\mu(d[G,o])
=
\int_{\mathcal G_\ast}\sum_{v\sim o} T([G,v,o])\mu(d[G,o]).
\]
Thus, by \eqref{eq:T}, the last two terms in \eqref{eq:proof-first-var-2} are equal, and therefore \eqref{eq:proof-first-var-2}
reduces exactly to \eqref{eq:first-variation}.
\end{proof}

\section{Proof of Theorem~\ref{thm:main}}\label{sec:proof-main}

This section focuses on the proof of Theorem~\ref{thm:main}. We start by proving two auxiliary lemmas.
Recall that $F$ is the representative-player first-order operator, defined in \eqref{eq:F}. 
\begin{lemma}\label{lemma:VI-minimizer}
Suppose that Assumptions \ref{ass:degree-bound} and \ref{ass:potential} hold, and let \(x^\ast\in\mathcal K\) be a minimizer of \(\Phi\) over \(\mathcal K\). Then
\begin{equation} \label{eq:VI-H}
\langle F(x^\ast),z-x^\ast\rangle_{\mathcal H}\ge 0,
\quad \text{for all } z\in\mathcal K.
\end{equation}
\end{lemma}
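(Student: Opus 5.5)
The plan is the standard first-order optimality argument for minimizing a Gâteaux differentiable functional over a convex set, combined with the identification of the derivative from Proposition~\ref{prop:Phi-Gateaux}. Fix $z\in\mathcal K$ and set $y:=z-x^\ast\in\mathcal H$; this is legitimate since $\mathcal H$ is a linear space. Because $\mathcal K$ is convex, the segment $x^\ast+t y=(1-t)x^\ast+tz$ lies in $\mathcal K$ for every $t\in[0,1]$. Minimality of $x^\ast$ over $\mathcal K$ then gives
\[
\frac{\Phi(x^\ast+ty)-\Phi(x^\ast)}{t}\ge 0,\quad t\in(0,1].
\]
Here both values are finite real numbers by Proposition~\ref{prop:Phi-finite}, so the difference quotient is well defined.

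Next I will let $t\downarrow 0$. By Proposition~\ref{prop:Phi-Gateaux}, $\Phi$ is Gâteaux differentiable at $x^\ast$. Its proof shows that the two-sided limit of the difference quotient along direction $y$ exists and equals the right-hand side of \eqref{eq:first-variation}. In particular the one-sided limit $t\downarrow0$ exists and equals the same expression, so the inequality passes to the limit:
\[
\langle \nabla\Phi(x^\ast),y\rangle_{\mathcal H}\ge 0.
\]
By \eqref{eq:first-variation} and the definition \eqref{eq:F} of $F$, the left-hand side equals $\langle F(x^\ast),y\rangle_{\mathcal H}=\langle F(x^\ast),z-x^\ast\rangle_{\mathcal H}$. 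Here we use that $F(x^\ast)\in\mathcal H$, as noted after \eqref{eq:F}, so that the pairing is a genuine inner product in $\mathcal H$. Since $z\in\mathcal K$ was arbitrary, this yields \eqref{eq:VI-H}.

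I expect no real obstacle. All the analytic work, namely the dominated convergence justifying differentiation under the integral and the mass-transport step that converts the $\nabla_2\psi$ term into the root player's $\nabla_1 h$ term, has already been done in Proposition~\ref{prop:Phi-Gateaux}. The only point to check is that only one-sided variations $t\in(0,1]$ are admissible when $K\neq H$. This is harmless because the limit in \eqref{eq:dct} was established for $t\to0$ through $[-1,1]\setminus\{0\}$, hence in particular from the right.
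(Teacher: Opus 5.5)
Your proposal is correct and follows the paper's proof essentially verbatim: perturb along $y=z-x^\ast$, use convexity of $\mathcal K$ and minimality to get a nonnegative right derivative at $t=0$, and identify it with $\langle F(x^\ast),z-x^\ast\rangle_{\mathcal H}$ via Proposition~\ref{prop:Phi-Gateaux}. Your extra remarks on finiteness of $\Phi$, on $F(x^\ast)\in\mathcal H$, and on one-sided variations are accurate details that the paper leaves implicit.
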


\begin{proof}
Let \(z\in\mathcal K\) and define the perturbation $y:=z-x^\ast\in\mathcal H$.
Because \(x^\ast\) minimizes \(\Phi\) over the convex set \(\mathcal K\), the function \(t\mapsto \Phi(x^\ast+ty)\) attains its minimum at \(t=0\) on \([0,1]\). Hence its right derivative at \(0\) is nonnegative, that is
\[
\langle \nabla\Phi(x^\ast),y\rangle_{\mathcal H}
=
\lim_{t\downarrow 0}\frac{\Phi(x^\ast+ty)-\Phi(x^\ast)}{t}
\ge 0.
\] 
By Proposition \ref{prop:Phi-Gateaux},
\[
\langle \nabla\Phi(x^\ast),y\rangle_{\mathcal H}
=
\langle F(x^\ast),y\rangle_{\mathcal H}
=
\langle F(x^\ast),z-x^\ast\rangle_{\mathcal H}.
\]
This proves \eqref{eq:VI-H}.
\end{proof}

\begin{lemma}\label{lemma:pointwise-VI}
Suppose that Assumptions \ref{ass:degree-bound} and \ref{ass:potential} hold, and let \(x^\ast\in\mathcal K\). If
\be\label{eq:prop-vi}
\langle F(x^\ast),z-x^\ast\rangle_{\mathcal H}\ge 0,
\quad \text{for all } z\in\mathcal K,
\ee
then for \(\mu\)-a.e.\ \([G,o]\),
\begin{equation} \label{eq:pointwise-root-VI}
\left\langle
F(x^\ast)([G,o]),\, a-x^\ast([G,o])
\right\rangle_H
\ge 0,
\quad \text{for all } a\in K.
\end{equation}
\end{lemma}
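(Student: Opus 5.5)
The plan is to argue by contradiction: localize a hypothetical violation set and test the variational inequality \eqref{eq:prop-vi} against a competitor $z$ that differs from $x^\ast$ only there. Write $g([G,o]):=F(x^\ast)([G,o])\in H$, which lies in $\mathcal H$ (as noted after \eqref{eq:F}), and $x^\ast_o:=x^\ast([G,o])$. Since $H$ is separable and $K$ is closed, I first reduce ``for all $a\in K$'' to a countable family. Fix a countable dense subset $\{a_k\}_{k\in\N}\subset K$. If $\langle g_o,a_k-x^\ast_o\rangle_H\ge 0$ holds for all $k$ outside a single $\mu$-null set, then continuity of $a\mapsto\langle g_o,a-x^\ast_o\rangle_H$ extends the inequality to all $a\in K$ on that same complement. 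Because a countable union of null sets is null, it therefore suffices to fix one $a_k$ and prove the inequality $\mu$-a.e.

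Fix $k$ and set
\[
A:=\big\{[G,o]:\ \langle g([G,o]),a_k-x^\ast([G,o])\rangle_H<0\big\},
\]
which is measurable since $g$ and $x^\ast$ are measurable and the inner product is continuous. Suppose $\mu(A)>0$. Define the competitor
\[
z:=a_k\mathbf 1_A+x^\ast\mathbf 1_{A^c}.
\]
It takes values in $K$ $\mu$-a.e., and it lies in $\mathcal H$ because constants are in $L^2$ of a probability measure. Hence $z\in\mathcal K$. We then compute
\[
\langle F(x^\ast),z-x^\ast\rangle_{\mathcal H}=\int_A\langle g,a_k-x^\ast\rangle_H\,d\mu<0,
\]
since the integrand is strictly negative on a set of positive measure. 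This contradicts \eqref{eq:prop-vi}, so $\mu(A)=0$.

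The one point to check is integrability of the integrand. It is bounded by $\|g\|_H(\|a_k\|_H+\|x^\ast\|_H)$, which is integrable by Cauchy--Schwarz because $g,x^\ast\in\mathcal H$.

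Two subtleties remain. The first is that $z$ must be a genuine action rule, i.e.\ a measurable map on $\mathcal G_\ast$. This holds automatically, since $A$ is defined through isomorphism classes, so no rerooting consistency issue arises; this is exactly why the root-level formulation works. The second is the density step, which needs separability of $H$ (and hence of $K$) so that a single null set serves for all $a\in K$. I expect neither to be a real obstacle.
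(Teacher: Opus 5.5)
Your proposal is correct and follows essentially the same argument as the paper: reduce to a countable dense subset of $K$, localize to a set of positive measure where the inequality fails, and test \eqref{eq:prop-vi} against the competitor equal to $a_k$ there and to $x^\ast$ elsewhere. Your version is slightly more careful, since working with the countable family from the start sidesteps the measurability of the uncountable union $A$ and makes the continuity-in-$a$ step explicit. It also checks the integrability that makes the final integral strictly negative.
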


\begin{proof}
Let
\[
A:=
\left\{
[G,o]\in\mathcal G_\ast :
\exists a\in K \text{ such that }
\left\langle
F(x^\ast)([G,o]),\, a-x^\ast([G,o])
\right\rangle_H<0
\right\}.
\]
We show that \(\mu(A)=0\). Suppose by contradiction that \(\mu(A)>0\). Since \(H\) is separable and \(K\subset H\) is closed, there exists a countable dense subset \((a_n)_{n\ge 1}\subset K\). Hence
\be\label{eq:An}
A=\bigcup_{n=1}^\infty A_n,
\quad
A_n:=
\left\{
[G,o]:
\left\langle
F(x^\ast)([G,o]),\, a_n-x^\ast([G,o])
\right\rangle_H<0
\right\}.
\ee
Therefore \(\mu(A_n)>0\) for some \(n\). Fix such an \(n\), and define
\[
z([G,o]):=
\begin{cases}
a_n, & [G,o]\in A_n,\\
x^\ast([G,o]), & [G,o]\notin A_n.
\end{cases}
\]
Since \(a_n\in K\) and \(x^\ast([G,o])\in K\) for \(\mu\)-a.e.~$[G,o]$, we have \(z\in\mathcal K\). Applying the variational inequality \eqref{eq:prop-vi} to this choice of \(z\), we obtain
\[
0\le
\langle F(x^\ast),z-x^\ast\rangle_{\mathcal H}
=
\int_{A_n}
\left\langle
F(x^\ast)([G,o]),\, a_n-x^\ast([G,o])
\right\rangle_H
\mu(d[G,o]),
\]
but the integrand is strictly negative on \(A_n\) by \eqref{eq:An}, which is a contradiction. Hence \(\mu(A)=0\), proving \eqref{eq:pointwise-root-VI}.
\end{proof}

We are now ready to prove Theorem~\ref{thm:main}.

\begin{proof}[Proof of Theorem~\ref{thm:main}]
By Lemma \ref{lemma:VI-minimizer}, the implication (i)$\implies$(ii) holds. 

Next, assume (ii), that is,
\[
\langle F(x^\ast),z-x^\ast\rangle_{\mathcal H}\ge 0,
\quad \text{for all } z\in\mathcal K.
\]
Hence Lemma \ref{lemma:pointwise-VI} yields
\be\label{eq:VI-F}
\left\langle
F(x^\ast)([G,o]),\, a-x^\ast([G,o])
\right\rangle_H
\ge 0,
\quad \text{for all } a\in K, \text{ for } \mu\text{-a.e. } [G,o],
\ee
Fix a rooted graph \([G,o]\) in the full-measure set where \eqref{eq:VI-F} holds, and define
\be\label{eq:Gamma}
\Gamma_{[G,o]}(a)
:=
f([G,o],a)+\sum_{v\sim o} h([G,o,v],a,x^\ast([G,v])),
\quad a\in H.
\ee
By Assumption \ref{ass:convex}, the map \(a\mapsto \Gamma_{[G,o]}(a)\) is convex, and by \eqref{eq:F},
\[
\nabla \Gamma_{[G,o]}(x^\ast([G,o]))=F(x^\ast)([G,o]).
\]
Therefore, by \eqref{eq:VI-F},
\be\label{eq:VI-Gamma}
\left\langle
\nabla \Gamma_{[G,o]}(x^\ast([G,o])),\, a-x^\ast([G,o])
\right\rangle_H\ge 0,
\quad \text{for all } a\in K.
\ee
It follows from \eqref{eq:VI-Gamma} that \(x^\ast([G,o])\) minimizes \(\Gamma_{[G,o]}\) over \(K\) (see \cite{bauschke2017}, Chapter~17, Proposition~17.51, Chapter~27, Proposition~27.8). Equivalently, recalling \eqref{eq:J} and \eqref{eq:Gamma} we have, 
\[
J_o^G(x^{\ast,G})\le J_o^G(a,x^{\ast,G}_{-o}),
\quad \text{for all } a\in K.
\]
Hence \(x^\ast\) is a quenched Nash equilibrium, so (iii) holds.

Conversely, assume that (iii) holds, i.e., let \(x^\ast\in\mathcal K\) be a quenched Nash equilibrium. Then for \(\mu\)-a.e.~\([G,o]\),
\[
J_o^G(x^{\ast,G})\le J_o^G(a,x^{\ast,G}_{-o}),
\quad \text{for all } a\in K.
\]
By \eqref{eq:J} and \eqref{eq:Gamma},
this means that \(x^\ast([G,o])\) minimizes \(\Gamma_{[G,o]}\) over \(K\). Since \(\Gamma_{[G,o]}\) is G\^ateaux differentiable and convex by Assumptions \ref{ass:potential} and \ref{ass:convex}, we get that
\[
\left\langle
\nabla \Gamma_{[G,o]}(x^\ast([G,o])),\, a-x^\ast([G,o])
\right\rangle_H\ge 0,
\quad \text{for all } a\in K, \text{ for } \mu\text{-a.e. } [G,o]
\]
(see \cite{bauschke2017}, Chapter~27, Proposition~27.8). Equivalently, by \eqref{eq:F},
\be\label{eq:VI-intermed}
\left\langle
F(x^\ast)([G,o]),\, a-x^\ast([G,o])
\right\rangle_H\ge 0,
\quad \text{for all } a\in K, \text{ for } \mu\text{-a.e. } [G,o].
\ee
Now let \(z\in\mathcal K\). Since \(z([G,o])\in K\) for \(\mu\)-a.e.~\([G,o]\), we may substitute \(a=z([G,o])\) in \eqref{eq:VI-intermed} and integrate with respect to $\mu$ to obtain
\[
\langle F(x^\ast),z-x^\ast\rangle_{\mathcal H}\ge 0,
\quad \text{for all } z\in\mathcal K,
\]
which is exactly (ii). 

Now assume that (ii) holds and, additionally, that \(\Phi\) is convex on \(\mathcal K\). Then, by Proposition \ref{prop:Phi-Gateaux},
\[
\langle \nabla\Phi(x^\ast),z-x^\ast\rangle_{\mathcal H}
=
\langle F(x^\ast),z-x^\ast\rangle_{\mathcal H}\ge 0,
\quad \text{for all } z\in\mathcal K.
\]
Since \(\Phi\) is convex, it follows that \(x^\ast\) minimizes \(\Phi\) over \(\mathcal K\) (see \cite{bauschke2017}, Chapter~27, Proposition~27.8), so (i) holds.
\end{proof}

\section{Proof of Theorem~\ref{thm:thermo-limit}}\label{sec:proof-conv}

Next, we prove Theorem~\ref{thm:thermo-limit}, establishing the thermodynamic limit.

\begin{proof}[Proof of Theorem~\ref{thm:thermo-limit}]
For \(x:\mathcal G_\ast\to H\), define $\Xi_x:\mathcal G_\ast\to\R$ by
\begin{equation}\label{eq:Xi-thermo}
\Xi_x([G,o])
:=
f\big([G,o],x([G,o])\big)
+
\frac12\sum_{v\sim o}\psi\big([G,o,v],x([G,o]),x([G,v])\big).
\end{equation}
Then
\be\label{eq:potential-xi}
\Phi_n(x)=\int_{\mathcal G_\ast}\Xi_x([G,o])\mu_n(d[G,o]),
\quad
\Phi(x)=\int_{\mathcal G_\ast}\Xi_x([G,o])\mu(d[G,o]).
\ee

\textbf{Step 1.} We first assume that $x$ is continuous. In this case, \(\Xi_x\) in \eqref{eq:Xi-thermo} is continuous on $\mathcal{G}_\ast$, since $f$, $\psi$, and $x$ are continuous. Next, arguing exactly as in the proof of Proposition~\ref{prop:Phi-finite}, by \eqref{eq:f-bound}, \eqref{eq:psi-bound}, and the Cauchy-Schwarz inequality, there is a constant \(C<\infty\) such that
\begin{equation}\begin{aligned}\label{eq:thermo-bound}
|\Xi_x([G,o])|
\le
C\Big(&
1+|f([G,o],a_\ast)|+\|\nabla f([G,o],a_\ast)\|_H^2
+\sum_{v\sim o}|\psi([G,o,v],a_\ast,a_\ast)|\\
&+\sum_{v\sim o}\|\nabla_1 h([G,o,v],a_\ast,a_\ast)\|_H^2+\|x_o\|_H^2+\sum_{v\sim o}\|x_v\|_H^2
\Big),
\end{aligned}\end{equation}
where \(x_o:=x([G,o])\) and \(x_v:=x([G,v])\). Set \(\delta:=\eps/2\). Since \(2(1+\delta)=2+\eps\), using the elementary inequality \((a_1+\dots+a_\ell)^{1+\delta}\le C_\delta\sum_{i=1}^\ell a_i^{1+\delta}\) for nonnegative \(a_i\), it follows from \eqref{eq:thermo-base-f}, \eqref{eq:thermo-base-psi}, \eqref{eq:thermo-base-h}, the first part of \eqref{eq:thermo-x}, and \eqref{eq:thermo-bound} that
\be\label{eq:Xix-bound}
\sup_{n\in\N}\int_{\mathcal G_\ast}|\Xi_x([G,o])|^{1+\delta}\mu_n(d[G,o])<\infty.
\ee
Here, the supremum of the last term on the right-hand side of \eqref{eq:thermo-bound} is finite by the mass-transport principle \eqref{eq:MTP} and the first part of \eqref{eq:thermo-x} as follows,
\be\begin{aligned}
\sup_{n\in\N}\int_{\mathcal G_\ast}\sum_{v\sim o}\|x_v\|_H^{2+\eps}\mu_n(d[G,o])
&=
\sup_{n\in\N}\int_{\mathcal G_\ast}\sum_{v\sim o}\|x_o\|_H^{2+\eps}\mu_n(d[G,o])\\
&\le
\sup_{n\in\N}D\int_{\mathcal G_\ast}\|x_o\|_H^{2+\eps}\mu_n(d[G,o])<\infty.
\end{aligned}\ee
For \(M>0\), define the truncation $\Xi_x^M:\mathcal G_\ast\to [-M,M]$ by
\be\label{eq:XiM-thermo}
\Xi_x^M([G,o]):=(-M)\vee \Xi_x([G,o])\wedge M.
\ee
Since \(\Xi_x\) is continuous, \(\Xi_x^M\) is bounded and continuous. Therefore, by the weak convergence of \(\mu_n\) to \(\mu\),
\begin{equation}\label{eq:trunc-conv}
\int_{\mathcal G_\ast}\Xi_x^M([G,o])\mu_n(d[G,o])
\longrightarrow
\int_{\mathcal G_\ast}\Xi_x^M([G,o])\mu(d[G,o]),
\quad \text{as } n \to \infty.
\end{equation}
Moreover, since \(|\Xi_x|^{1+\delta}\wedge M\) is also bounded and continuous, we have by \eqref{eq:Xix-bound},
\be\begin{aligned}\label{eq:trunc-bound}
\int_{\mathcal G_\ast}\big(|\Xi_x([G,o])|^{1+\delta}\wedge M\big)\mu(d[G,o])
&=
\lim_{n\to\infty}
\int_{\mathcal G_\ast}\big(|\Xi_x([G,o])|^{1+\delta}\wedge M\big)\mu_n(d[G,o])\\
&\le
\sup_{n\in\N}\int_{\mathcal G_\ast}|\Xi_x([G,o])|^{1+\delta}\mu_n(d[G,o])<\infty.
\end{aligned}\ee
Letting \(M\to\infty\) in \eqref{eq:trunc-bound}, the monotone convergence theorem yields \(\Xi_x\in L^{1+\delta}(\mu,\R)\), and hence \(\Xi_x\in L^1(\mu,\R)\). Moreover, since
\[
|\Xi_x([G,o])-\Xi_x^M([G,o])|
\le |\Xi_x([G,o])|\,\mathbf 1_{\{|\Xi_x([G,o])|>M\}},
\]
we obtain, for every \(n\in\N\), by Hölder's inequality,
\be\begin{aligned}\label{eq:Holder}
&\int_{\mathcal G_\ast}|\Xi_x([G,o])-\Xi_x^M([G,o])|\mu_n(d[G,o])\\
&\leq
\int_{\mathcal G_\ast}|\Xi_x([G,o])|\,\mathbf 1_{\{|\Xi_x([G,o])|>M\}}\mu_n(d[G,o])\\
&\leq
\Big(\int_{\mathcal G_\ast}|\Xi_x([G,o])|^{1+\delta}\mu_n(d[G,o])\Big)^{\frac1{1+\delta}}
\mu_n\big(|\Xi_x([G,o])|>M\big)^{\frac{\delta}{1+\delta}}.
\end{aligned}\ee
Using Markov's inequality,
\be\label{eq:Markov}
\mu_n\big(|\Xi_x([G,o])|>M\big)
\leq
\frac{1}{M^{1+\delta}}
\int_{\mathcal G_\ast}|\Xi_x([G,o])|^{1+\delta}\mu_n(d[G,o]),
\ee
and therefore by \eqref{eq:Holder} and \eqref{eq:Markov},
\be\label{eq:diff}
\int_{\mathcal G_\ast}|\Xi_x([G,o])-\Xi_x^M([G,o])|\mu_n(d[G,o])
\leq
\frac{1}{M^\delta}
\int_{\mathcal G_\ast}|\Xi_x([G,o])|^{1+\delta}\mu_n(d[G,o]).
\ee
Hence, by \eqref{eq:Xix-bound} and \eqref{eq:diff},
\begin{equation}\begin{aligned}\label{eq:tail-unif}
\sup_{n\ge1}\int_{\mathcal G_\ast}|\Xi_x([G,o])-\Xi_x^M([G,o])|\mu_n(d[G,o])\le \frac{1}{M^\delta}\sup_{n\in\N}\int_{\mathcal G_\ast}|\Xi_x([G,o])|^{1+\delta}\mu_n(d[G,o])\to 0,
\end{aligned}\end{equation}
as $M\to\infty$. Similarly, since \(\Xi_x\in L^1(\mu,\R)\), the dominated convergence theorem implies
\begin{equation}\label{eq:tail-mu}
\lim_{M\to\infty}\int_{\mathcal G_\ast}|\Xi_x([G,o])-\Xi_x^M([G,o])|\mu(d[G,o])=0.
\end{equation}
Finally, by \eqref{eq:potential-xi}, omitting the dependence on $[G,o]$ in the notation, for every \(M>0\),
\be\begin{aligned}\label{eq:tail-all}
|\Phi_n(x)-\Phi(x)|
\le
\int_{\mathcal G_\ast}|\Xi_x-\Xi_x^M|d\mu_n
+
\left|
\int_{\mathcal G_\ast}\Xi_x^Md\mu_n
-
\int_{\mathcal G_\ast}\Xi_x^Md\mu
\right|
+
\int_{\mathcal G_\ast}|\Xi_x-\Xi_x^M|d\mu.
\end{aligned}\ee
Letting first \(n\to\infty\), we obtain from \eqref{eq:trunc-conv} and \eqref{eq:tail-all},
\be\label{eq:limsup}
\limsup_{n\to\infty}|\Phi_n(x)-\Phi(x)|
\le
\sup_{n\in\N}\int_{\mathcal G_\ast}|\Xi_x-\Xi_x^M|d\mu_n
+
\int_{\mathcal G_\ast}|\Xi_x-\Xi_x^M|d\mu.
\ee
Now let \(M\to\infty\). By \eqref{eq:tail-unif} and \eqref{eq:tail-mu}, the right-hand side of \eqref{eq:limsup} converges to \(0\). This completes the proof for continuous $x$.

\textbf{Step 2.} We now consider the general case where $x$ is only assumed to be measurable. Recall the definition of \(\Xi_x\) in \eqref{eq:Xi-thermo}. Arguing exactly as in Step 1, there exists \(C<\infty\) such that \eqref{eq:thermo-bound} holds. Setting \(\delta:=\eps/2\), it follows from \eqref{eq:thermo-base-f}, \eqref{eq:thermo-base-psi}, \eqref{eq:thermo-base-h}, \eqref{eq:thermo-x}, \eqref{eq:thermo-bound}, and the mass-transport principle \eqref{eq:MTP} that
\begin{equation}\label{eq:UI-meas}
\sup_{n\in\N}\int_{\mathcal G_\ast}|\Xi_x([G,o])|^{1+\delta}\mu_n(d[G,o])<\infty
\quad\text{and}\quad
\Xi_x\in L^{1+\delta}(\mu,\R).
\end{equation}
Note that we have used both parts of \eqref{eq:thermo-x} in order to derive \eqref{eq:UI-meas}, while only the first part of \eqref{eq:thermo-x} was needed in Step 1. Indeed, in Step 1 the statement $\Xi_x\in L^{1+\delta}(\mu,\R)$ could be inferred from \eqref{eq:trunc-bound} and the monotone convergence theorem. 

Recall the definition of \(\Xi_x^M\) in \eqref{eq:XiM-thermo}.
By \eqref{eq:UI-meas}, it follows as in \eqref{eq:tail-unif} and \eqref{eq:tail-mu} that
\begin{equation}\label{eq:UI-tail-meas}
\lim_{M\to\infty}\sup_{n\in\N}\int_{\mathcal G_\ast}|\Xi_x-\Xi_x^M|d\mu_n=0
\quad\text{and}\quad
\lim_{M\to\infty}\int_{\mathcal G_\ast}|\Xi_x-\Xi_x^M|d\mu=0.
\end{equation}
Set
\[
\mathcal G_\ast^D:=\Big\{[G,o]\in\mathcal G_\ast:\deg_G(v)\le D\text{ for all }v\in V(G)\Big\}.
\]
By Assumptions~\ref{ass:degree-bound} and \ref{ass:thermo}, the measures \(\mu\) and \(\mu_n\) are supported on \(\mathcal G_\ast^D\). Moreover, \(\mathcal G_\ast^D\) is compact under the metric \(d_\ast\) (see \cite{lovasz2012large}, Chapter~18.3.1). Fix \(\rho>0\). Since \(\mathcal G_\ast^D\) is compact, Lusin's theorem (see \cite{folland1999real}, Chapter 7, Theorem 7.10) yields a compact set \( L\subset\mathcal{G}_\ast^D\subset\mathcal G_\ast\) such that
\be\label{eq:rho/2-1}
\mu(L^c)<\frac{\rho}{2},
\ee
and the restriction $x|_L:L\to H$ is continuous. Therefore, by the Dugundji extension theorem (see \cite{dugundji1951extension}, Theorem~4.1), there exists a continuous map $x_\rho:\mathcal G_\ast\to H$ such that $x_\rho|_L=x|_L$. We next shrink \(L\) slightly in order to obtain a compact \(\mu\)-continuity set. For any \(t>0\), define
\be\label{eq:Lt}
L^t=\Big\{[G,o]\in L\,\Big|\, d_\ast([G,o],L^c)\ge t\Big\}.
\ee
Then, since the map $[G,o]\mapsto d_\ast([G,o],L^c)$ is continuous, for each $t$, the set  \(L^t\subset L\) is closed, and therefore compact. Moreover, \(L^t\uparrow L\) as \(t\downarrow 0\). Hence, by continuity from below of $\mu$, for sufficiently small \(t>0\),
\be\label{eq:rho/2-2}
\mu(L\setminus L^t)<\frac{\rho}{2}.
\ee
Next, observe that by \eqref{eq:Lt},
\be\label{eq:partialLt}
\partial L^t\subset\Big\{[G,o]\in L\,\Big|\, d_\ast([G,o],L^c)= t\Big\}.
\ee
The level sets on the right-hand side of \eqref{eq:partialLt} are pairwise disjoint for different values of \(t\). Since \(\mu\) is finite, it follows that there are at most countably many \(t\in(0,\infty)\) for which
\[
\mu\Big(\big\{[G,o]\in L\,\big|\, d_\ast([G,o],L^c)= t\big\}\Big)>0.
\]
Therefore, by \eqref{eq:partialLt}, we can choose \(t>0\) sufficiently small such that both \eqref{eq:rho/2-2} holds and
\[
\mu(\partial L^t)=0.
\]
For such $t$ denote $L_\rho:=L^t$. Then \(L_\rho\subset L\), so
\be\label{eq:Lusin-good}
x_\rho|_{L_\rho}=x|_{L_\rho}.
\ee
Moreover, \(L_\rho\) is compact, and by \eqref{eq:rho/2-1} and \eqref{eq:rho/2-2},
\be \label{gg1} 
\mu(L_\rho^c)= \mu(L^c)+\mu(L\setminus L_\rho)<\rho.
\ee
Finally, since \(\mu(\partial L_\rho)=0\), the set \(L_\rho\) is a \(\mu\)-continuity set. Hence, by the weak convergence \(\smash{\mu_n\xrightarrow{w}\mu}\) and the Portmanteau theorem,
\begin{equation}\label{eq:L-conv}
\mu_n(L_\rho^c)\longrightarrow \mu(L_\rho^c), \quad \text{as } n \to \infty.
\end{equation}
Now define
\be\label{eq:Arho}
A_\rho
:=
\Big\{
[G,o]\in\mathcal G_\ast\,\Big|\,[G,o]\notin L_\rho
\ \text{or }[G,v]\notin L_\rho \text{ for some } v\sim o 
\Big\}.
\ee
Next, fix \(M>0\). Then, by \eqref{eq:Lusin-good}, we have
\[
\Xi_x^M([G,o])=\Xi_{x_\rho}^M([G,o]),
\quad\text{for } [G,o]\in A_\rho^c.
\]
Therefore,
\begin{equation}\label{eq:trunc-diff-rho}
|\Xi_x^M([G,o])-\Xi_{x_\rho}^M([G,o])|
\le 2M\,\mathbf 1_{A_\rho}([G,o]).
\end{equation}
Moreover, using the mass-transport principle \eqref{eq:MTP} and Assumption~\ref{ass:thermo}, it follows from \eqref{eq:Arho},
\begin{align}
\mu_n(A_\rho)
&\le
\mu_n(L_\rho^c)
+
\int_{\mathcal G_\ast}\sum_{v\sim o}\mathbf 1_{L_\rho^c}([G,v])\mu_n(d[G,o]) \nonumber\\
&=
\mu_n(L_\rho^c)
+
\int_{\mathcal G_\ast}\sum_{v\sim o}\mathbf 1_{L_\rho^c}([G,o])\mu_n(d[G,o]) \nonumber\\
&\le
(D+1)\mu_n(L_\rho^c).
\label{eq:Arho-bound}
\end{align}
Hence, from \eqref{gg1} and \eqref{eq:L-conv} we get, 
\begin{equation}\label{eq:Arho-limsup}
\limsup_{n\to\infty}\mu_n(A_\rho)\le (D+1)\rho.
\end{equation}
Similarly,
\begin{equation}\label{eq:Arho-mu}
\mu(A_\rho)\le (D+1)\rho.
\end{equation}
Now, since \(x_\rho\) is continuous, \(\Xi_{x_\rho}^M\) is bounded and continuous on \(\mathcal G_\ast\). Therefore, by weak convergence,
\begin{equation}\label{eq:cont-approx-conv}
\int_{\mathcal G_\ast}\Xi_{x_\rho}^Md\mu_n
\longrightarrow
\int_{\mathcal G_\ast}\Xi_{x_\rho}^Md\mu.
\end{equation}
Finally, it holds for every \(n\in\N\),
\begin{align*}
\left|\int_{\mathcal G_\ast}\Xi_xd\mu_n-\int_{\mathcal G_\ast}\Xi_xd\mu\right|
&\le
\int_{\mathcal G_\ast}|\Xi_x-\Xi_x^M|d\mu_n
+
\int_{\mathcal G_\ast}|\Xi_x^M-\Xi_{x_\rho}^M|d\mu_n \\
&\quad
+
\left|
\int_{\mathcal G_\ast}\Xi_{x_\rho}^Md\mu_n
-
\int_{\mathcal G_\ast}\Xi_{x_\rho}^Md\mu
\right| \\
&\quad
+
\int_{\mathcal G_\ast}|\Xi_{x_\rho}^M-\Xi_x^M|d\mu
+
\int_{\mathcal G_\ast}|\Xi_x^M-\Xi_x|d\mu.
\end{align*}
Using \eqref{eq:trunc-diff-rho}, \eqref{eq:Arho-limsup}, \eqref{eq:Arho-mu}, and \eqref{eq:cont-approx-conv}, we obtain by letting \(n\to\infty\),
\be\label{eq:bound-final}
\limsup_{n\to\infty}
\left|\int_{\mathcal G_\ast}\Xi_xd\mu_n-\int_{\mathcal G_\ast}\Xi_xd\mu\right|
\le
\sup_{n\in\N}\int_{\mathcal G_\ast}|\Xi_x-\Xi_x^M|d\mu_n
+
4M(D+1)\rho
+
\int_{\mathcal G_\ast}|\Xi_x^M-\Xi_x|d\mu.
\ee
Now, in \eqref{eq:bound-final}, let first \(\rho\to 0\), and then \(M\to\infty\). By \eqref{eq:UI-tail-meas}, the right-hand side of \eqref{eq:bound-final} then converges to \(0\). Hence
\[
\int_{\mathcal G_\ast}\Xi_x([G,o])\mu_n(d[G,o])
\longrightarrow
\int_{\mathcal G_\ast}\Xi_x([G,o])\mu(d[G,o]),\quad \text{as } n \to \infty.
\]
Recalling \eqref{eq:potential-xi}, this is exactly \(\Phi_n(x)\to\Phi(x)\), as $n\rightarrow \infty$.
\end{proof}

\section{Proofs of Section~\ref{sec:examples}}\label{sec:proofs-examples}

We now present the proofs of the results in Section~\ref{sec:examples}. We first prove Proposition~\ref{prop:lq-example}.
\begin{proof}[Proof of Proposition~\ref{prop:lq-example}]
Recalling \eqref{eq:lq-f}, we have
\be\label{eq:lq-nabla-f}
\nabla f([G,o],a)=-\eta([G,o]),
\ee
so \(a\mapsto f([G,o],a)\) is G\^ateaux differentiable on \(\mathbb R\), and \eqref{eq:lip-f} holds with \(L_f=0\). Next, from \eqref{eq:lq-h},
\[
\nabla_1 h([G,o,v],a,b)=a-b,
\quad
\nabla_2 h([G,o,v],a,b)=b-a.
\]
Since \(\psi=h\), it follows that
\be\label{eq:lq-nabla-psi}
\nabla_1\psi([G,o,v],a,b)=a-b=\nabla_1 h([G,o,v],a,b),
\ee
and
\[
\nabla_2\psi([G,o,v],a,b)=b-a=\nabla_1 h([G,v,o],b,a),
\]
so \eqref{eq:potential-id-1}--\eqref{eq:potential-id-2} hold. Moreover,
\[
|\nabla_1 h([G,o,v],a,b)-\nabla_1 h([G,o,v],a',b')|
=
|(a-b)-(a'-b')|
\le |a-a'|+|b-b'|,
\]
so \eqref{eq:lip-h} holds with \(L_h=1\). Assumption \ref{ass:convex} also holds, since for every family \((b_v)_{v\sim o}\subset\mathbb R\), the map
\[
a\mapsto -\eta([G,o])\,a+\frac12\sum_{v\sim o}(a-b_v)^2
\]
is convex. Now \eqref{eq:lq-potential} follows directly from \eqref{eq:Phi}, because \(\psi=h\) and
\[
\frac12\sum_{v\sim o}\psi([G,o,v],x([G,o]),x([G,v]))
=
\frac14\sum_{v\sim o}\big(x([G,o])-x([G,v])\big)^2.
\]
Moreover, it follows from \eqref{eq:F}, \eqref{eq:lq-nabla-f}, and \eqref{eq:lq-nabla-psi} that the representative-player first-order operator \(F\) takes the form
\begin{equation}\label{eq:lq-F}
F(x)([G,o])
=
-\eta([G,o])+\sum_{v\sim o}\big(x([G,o])-x([G,v])\big).
\end{equation}
Thus, if \(x^\ast\in\mathcal{H}\) satisfies 
\begin{equation}\label{eq:lq-EL}
\sum_{v\sim o}\big(x^\ast([G,o])-x^\ast([G,v])\big)=\eta([G,o]),
\quad \text{for } \mu\text{-a.e. } [G,o],
\end{equation}
then by \eqref{eq:lq-F} we have \(F(x^\ast)=0\). Furthermore, \(\Phi\) is convex by \eqref{eq:lq-potential}. Hence \(x^\ast\) minimizes the potential functional \eqref{eq:lq-potential} (see \cite{bauschke2017}, Chapter~27, Proposition~27.8). By Theorem \ref{thm:main}, it follows that \(x^\ast\) is a quenched Nash equilibrium of the game. Finally, dividing \eqref{eq:lq-EL} by \(\deg_G(o)\) yields
\[
x^\ast([G,o])-\frac1{\deg_G(o)}\sum_{v\sim o}x^\ast([G,v])
=
\frac{\eta([G,o])}{\deg_G(o)},
\]
which, recalling \eqref{eq:lq-P} and \eqref{eq:lq-Delta}, is exactly \eqref{eq:lq-poisson}.
\end{proof}

We continue with the proof of Corollary~\ref{cor:green}.

\begin{proof}[Proof of Corollary~\ref{cor:green}]
Recalling \eqref{eq:lq-spectral-radius}, first fix a graph \(G\) in the full-measure set where \(\rho(P_G)<1\). Since the simple random walk on \(G\) is reversible with respect to the measure \(m_G\) from \eqref{eq:lq-mG}, it follows that the Green kernel \eqref{eq:lq-green-kernel} defines a bounded linear operator on \(\ell^2(V(G),m_G)\), given by
\begin{equation}\label{eq:lq-GG}
(K_G \xi)(v):=\sum_{u\in V(G)}K_G(v,u)\xi(u),\quad v\in V(G)
\end{equation}
(see \cite{woess2000random}, Chapter II, Section 10.A, Theorem~10.3). Recalling \eqref{eq:lq-DeltaG}, it holds that 
\be\label{eq:lq-GG-eq}
K_G=(I_G-P_G)^{-1}=(\Delta_G)^{-1}
\ee
is bounded linear operator on \(\ell^2(V(G),m_G)\) (see \cite{woess2000random}, Chapter I, Section 2.A, Equation 2.7). Now define $\xi^G$ as in \eqref{eq:xi}. Hence, if \(x^{\ast,G}\) is given by \eqref{eq:lq-green}, then
$x^{\ast,G}=K_G \xi^G$ by \eqref{eq:lq-GG}, and therefore by \eqref{eq:lq-GG-eq},
\[
(I_G-P_G)x^{\ast,G}=\xi^G.
\]
Equivalently,
\be\label{eq:lq-vpoi}
(\Delta_G x^{\ast,G})(v)=\xi^G_v=\frac{\eta([G,v])}{\deg_G(v)},
\quad v\in V(G).
\ee
Evaluating \eqref{eq:lq-vpoi} at the root and recalling \eqref{eq:xGv}, we obtain
\[
(\Delta x^\ast)([G,o])=\frac{\eta([G,o])}{\deg_G(o)},
\quad \text{for } \mu\text{-a.e. } [G,o].
\]
Thus \eqref{eq:lq-poisson} holds, and Proposition~\ref{prop:lq-example} yields that \(x^\ast\) is a quenched Nash equilibrium.
\end{proof}

We conclude with the proof of Proposition~\ref{prop:W-example}.

\begin{proof}[Proof of Proposition~\ref{prop:W-example}]
Let \(x^\ast\in\mathcal H\) satisfy \eqref{eq:W-poisson}. Recalling \eqref{eq:W-Delta}, fix a rooted graph \([G,o]\) in the full-measure set where \eqref{eq:W-poisson} holds, and define $\Gamma_{[G,o]}:\R\to\R$ by
\be\begin{aligned}\label{eq:W-Gamma}
\Gamma_{[G,o]}(a)
:&=
f([G,o],a)+\sum_{v\sim o}h([G,o,v],a,x^\ast([G,v]))
\\&=
-\eta([G,o])\,a+\sum_{v\sim o}W\!\big(a-x^\ast([G,v])\big),
\end{aligned}\ee
where we used \eqref{eq:W-f} and \eqref{eq:W-h}. Since \(W\) is convex, the map \(a\mapsto \Gamma_{[G,o]}(a)\) is convex. Moreover, by \eqref{eq:W-Gamma},
\[
\Gamma_{[G,o]}'(a)
=
-\eta([G,o])+\sum_{v\sim o}W'\!\big(a-x^\ast([G,v])\big).
\]
Using \eqref{eq:W-Delta} and \eqref{eq:W-poisson}, we obtain
\[
\Gamma_{[G,o]}'\big(x^\ast([G,o])\big)
=
-\eta([G,o])+\sum_{v\sim o}W'\!\big(x^\ast([G,o])-x^\ast([G,v])\big)
=
0.
\]
Therefore \(x^\ast([G,o])\) minimizes \(\Gamma_{[G,o]}\) over \(\mathbb R\). Moreover, by \eqref{eq:J} and \eqref{eq:W-Gamma}, for every \(a\in\mathbb R\),
\[
J_o^G(a,x_{-o}^{\ast,G})
=
f([G,o],a)+\sum_{v\sim o}h([G,o,v],a,x^\ast([G,v]))
=
\Gamma_{[G,o]}(a).
\]
Hence
\[
J_o^G(x^{\ast,G})=\Gamma_{[G,o]}(x^\ast([G,o]))
\le \Gamma_{[G,o]}(a)=J_o^G(a,x_{-o}^{\ast,G}),
\quad \text{for all } a\in\mathbb R.
\]
Since this holds for $\mu$-a.e.~\([G,o]\), \(x^\ast\) is a quenched Nash equilibrium.
\end{proof}

\end{document}